\documentclass[11pt]{amsart}
\usepackage{amsmath,amsfonts,amssymb,amsthm,epsfig,verbatim,ifthen,graphicx}
\usepackage[usenames,dvips]{color}
\usepackage[all]{xy}
 \numberwithin{equation}{section}

\theoremstyle{definition} 
\usepackage{amsmath}
\usepackage{amsfonts}
\usepackage{amstext}
\usepackage{amsbsy}
\usepackage{amsopn}
\usepackage{amsxtra}
\usepackage{upref}
\usepackage{amsthm}
\usepackage{amsmath}
\usepackage{amssymb}
\usepackage{tikz}
\usetikzlibrary{automata,positioning}
\usepackage{enumitem}
\usepackage{epsfig,verbatim,ifthen,graphicx}
\usepackage[all]{xy}
\usepackage{epsfig,verbatim,ifthen,graphicx}

\newtheorem{prop}{Proposition}[section]
\newtheorem{lemma}{Lemma}[section]
\newtheorem{thm}{Theorem}[section]
\newtheorem{ex}{Example}[section]
\newtheorem{coro}{Corollary}[section]

\theoremstyle{definition}
\newtheorem{defi}{Definition}[section]
\newtheorem{rem}{Remark}[section]
\usepackage{enumitem}

\def\R{{\mathbb R}}
\def\C{\ensuremath{\mathbb C}}
\def\N{{\mathbb N}}

\def\F{{\mathcal F}}

\def\G{{\mathcal G}}

\begin{document}
\title
{Existence of Gibbs measures for sequences of continuous functions  } 

\subjclass[2000]{37D35, 37B10}
\keywords{Symbolic dynamical systems, thermodynamic formalism, Gibbs measures}

\author{Yuki Yayama}
\address{Centro de Ciencias Exactas, 
Departamento de Ciencias B\'{a}sicas,  Universidad del B\'{i}o-B\'{i}o, Avenida Andr\'{e}s Bello 720, Casilla 447, Chill\'{a}n, Chile}
\email{yyayama@ubiobio.cl}

\begin{abstract}
We  give a necessary and sufficient condition for the existence of
 invariant Gibbs measures for sequences of continuous functions on one-sided subshifts
 and, more generally, for the existence of Gibbs measures. These extend the results of  Kim \cite{kim} and Baker and Ghenciu \cite{BG}, respectively,  to sequences of continuous functions on one-sided subshifts.
In particular, we characterize the existence of 
invariant Gibbs measures for subadditive sequences.
For superadditive sequences on subshifts with the strong specification property, 
such a characterization gives a necessary and sufficient condition for the uniqueness of 
invariant Gibbs measures. 
We apply these results to study some problems in the theory of relative pressure and relative equilibrium states.  

\end{abstract}
\maketitle
\section{Introduction}\label{intro}
The non-additive thermodynamic formalism is a generalization of the formalism for continuous functions, 
where we consider  sequences of continuous functions instead of continuous functions.  
In the thermodynamic formalism for continuous functions, existence and uniqueness of invariant Gibbs measures 
have been intensively
studied in relation to the study of equilibrium states.
In particular, it is known that if a continuous function $f$ belongs to the Bowen class on a one-sided subshift with the weak 
specification property, then there exists a unique invariant Gibbs measure for $f$ \cite{Bo}. 
In the non-additive 
thermodynamic formalism,
the notion of  Gibbs measures is generalized for sequences of continuous functions.  
Barreira \cite{b2} and Mummert \cite{m} proved that for an almost additive sequence with 
bounded variation on a one-sided subshift with the strong specification property, there exists a unique invariant Gibbs measure.
More generally, Feng \cite{Fe} proved that a sequence  (a so-called quasi-multiplicative sequence) of continuous functions  has a unique invariant Gibbs measure.  The quasi-multiplicativity implies the weak specification property 
of a one-sided subshift. These sequences are applied to study dimension theory under non-conformal settings and the 
variational principle holds.  
Quasi-multiplicative sequences also arise naturally in the thermodynamic formalism for 
matrix cocycles (see for example \cite{P}). 
Recently, 
the existence and uniqueness of invariant Gibbs measures for superadditive sequences $t\F$, where $\F$ is a quasi-multiplicative sequence associated with a matrix cocyle and $t<0$, were proved under certain settings and certain conditions on $t$
(see \cite{R, MV, MQ}).
   
Although the existence of invariant Gibbs measures for sequences of continuous functions has  
been studied widely, a necessary and sufficient condition for the existence has not been studied. In this paper we study 
a condition for the existence of invariant Gibbs measures for 
sequences of continuous functions on arbitrary one-sided subshifts.

Recently, Kim \cite{kim} gave a necessary and sufficient condition for 
the existence of invariant Gibbs measures for continuous functions on two-sided subshifts over finitely many symbols, studying the property called ``balanced subshift''. 
The notion of ``balanced subshift'' for two-sided subshifts was first introduced by Baker and Ghenciu \cite{BG}  with respect to the continuous function $f=0$ and it was applied to give a necessary and sufficient  condition for 
the existence of Gibbs measures for continuous function $f=0$ \cite[Theorem 3.14]{BG}.
We first extend the notion to general sequences of continuous functions on one-sided subshifts
 by introducing the following  slightly more general form of the ``balanced subshift" condition. 

Let $(X,\sigma_X)$ be a one-sided subshift over finitely many symbols and 
$\F=\{\log f_n\}_{n\in \N}$ be a sequence of continuous functions on $X$. 
Denote by $B_n(X)$ the set of allowable words of length $n\in \N$ in $X$. Given an allowable word $u$ in $X$, for each $n\in \N$, define  $F_n(u):=\{v\in B_n(X): uv \text{ is an allowable word in } $X$\}$ and $P_n(u) =\{v\in B_n(X): vu \text{ is an allowable word in } $X$\}$.
\begin{defi}\label{Balance}
A subshift $X$ is {\em right balanced with respect to $\F$}  if there exist $C\geq 1$ and $K\in \N$ such that for each $m\in\N, n\geq K$ and $u\in  B_m(X)$
\begin{equation}\label{RB}
\frac{1}{C}\leq\frac{\sum_{v\in F_n(u)}f_{m+n}(x_{uv})}{f_m(x_u)\sum_{w\in B_{n}(X)}f_{n}(x_{w})}\leq C
\end{equation}
for each choice of $x_{u}\in[u]$, $x_{uv}\in [uv]$, $x_{w}\in [w]$. 
A subshift $X$ is {\em left balanced with respect to $\F$}  if there exist $C\geq 1$ and $K\in \N$ such that for each $m\in\N, n\geq K$ and $u\in  B_m(X)$
\begin{equation}\label{LB}
\frac{1}{C}\leq\frac{\sum_{v\in P_n(u)}f_{m+n}(x_{vu})}{f_m(x_u)\sum_{w\in B_{n}(X)}f_{n}(x_{w})}\leq C
\end{equation}
for each choice of $x_{u}\in[u]$, $x_{vu}\in [vu]$, $x_{w}\in [w]$. A subshift $X$ is {\em balanced with 
respect to $\F$}  if it is right balanced and left balanced with respect to $\F$. It is 
{\em one-sided balanced with respect to $\F$} if it is left or right 
balanced with respect to $\F$.
\end{defi}
In the original definitions introduced by \cite{kim}, $(X, \sigma_X)$ is a two-sided subshift, $f_n=e^{S_nf}$ where $f\in C(X)$, and 
$K=1$. Under this setting, Kim \cite[Theorem 3.14]{kim} proved that  there exists an 
 invariant Gibbs measure for 
$f\in C(X)$ if and only if $X$ is balanced with respect to $f$.

In Theorem \ref{main} \ref{m1}, we first give a necessary and sufficient  condition for the existence of 
Gibbs measures for sequences of continuous functions on  one-sided subshifts $X$ by using the
right balanced property. This extends the result of Baker and Ghenciu
\cite[Theorem 3.14]{BG}. 
In Theorem \ref{main}\ref{m2}, we extend  \cite[Theorem 3.14]{kim} by showing that there is an 
invariant Gibbs measure for 
a sequence $\F$ on $X$  if and only if $X$ is balanced with respect to $\F$. 
Here $\F$ is an arbitrary sequence of continuous functions and $X$ is an arbitrary one-sided subshift. 
Our proof is direct: we construct a Gibbs measure by using the right balanced property 
and then using the Gibbs measure and the left balanced property we construct an invariant Gibbs measure.  
Using the proof, in Corollary \ref{onesidedGibbs}, we also study a condition for a sequence $\F$ on $X$ to have an invariant measure $\mu$ satisfying only
the lower bound or upper bound  property of Gibbs measures.


The balanced condition on a subshift with respect 
to a sequence of continuous functions does not imply the ergodicity of an invariant Gibbs measure nor the uniqueness of Gibbs measures (Example \ref{ex1}). 
In section \ref{onlysub}, we first characterize the existence of invariant Gibbs measures for 
subadditive sequences of continuous functions  
(Theorem \ref{mishiro}). For superadditive sequences on subshifts with the strong specification property, 
the balanced condition is a necessary and sufficient condition for the uniqueness of 
invariant Gibbs measures. The variational principal for 
the topological pressure holds and the unique invariant Gibbs measure is an ergodic 
equilibrium state for $\F$ (Theorem \ref{mishiroU}).
The property of quasi-multiplicative sequences is studied using balanced subshifts (Propositions \ref{submulti} and 
\ref{ncl})
and examples of subshifts satisfying the balanced property are given. 

In section \ref{apli}, we apply Theorem \ref{main}\ref{m2} to study some problems in the theory of relative pressure. Given a one-block factor maps $\pi: X \to Y $ between one-sided subshifts with the strong specification property and a unique equilibrium 
state $\mu$ for a subadditive sequence $\G$ on $Y$ which is also Gibbs for $\G$, we consider 
preimage measures $\nu$ on $X$ such that $\nu$ is a measure of maximal relative entropy over $\mu$.
Such a measure $\nu$ is an equilibrium state for a sequence of continuous functions which is in general neither subadditive nor superadditve.
Yoo \cite{Yo} studied factor maps between two-sided shift spaces and showed the uniqueness of such $\nu$ when  $\mu$ is Gibbs for a H\"{o}lder continuous function on an irreducible shift of finite type $X$. However, properties of $\nu$ were not studied.
More generally, if $\mu$ is a unique equilibrium 
state for a certain subadditive sequence $\G$ on $Y$ which is also Gibbs for $\G$, such a measure $\nu$ appears as a measure of full Hausdorff dimension of a compact invariant set 
of a certain expanding map \cite{Fe, Y2}. However, the Gibbs property of such a measure has not been fully studied. 
In Theorem \ref{general}, we 
study an equivalent condition for such measures to be Gibbs.   
 Finally in Proposition \ref{yy} we study factors of invariant Gibbs measures in terms of the balanced property.

\section{Background}

\subsection{Sequences of continuous functions.}\label{seqmany}
 We say that $(X, \sigma_X)$ is a {\em one-sided subshift} over $\{1,\dots, k\}$ if $X$ is a closed
shift-invariant subset of $\Sigma_{k}^{+}:=\{1,\dots, k\}^{\N}$ for some $k\geq
1$, i.e., $\sigma_X(X)\subseteq X$,  where the shift 
$\sigma_X:X\rightarrow X$
is defined by $(\sigma_X(x))_{i}=x_{i+1}$ for all $i\in \N$, $x=(x_n)^{\infty}_{n=1} \in X.$
Throughout the paper, we consider one-sided subshifts. 
Define a metric $d$ on $X$ by $d(x,x')={1}/{2^{k}}$ if
$x_i=x'_i$ for all $1\leq i\leq k$ and $x_{k+1}\neq {x'}_{k+1}$, $d(x,x')=1$ if $x_1\neq x'_1$,
and $d(x,x')=0$ otherwise. Define $B_{0}(X)=\{\epsilon\},$ where  $\epsilon$ is the empty word of length $0$.
The language of $X$ is the set $B(X)=\cup_{n=0}^{\infty}B_n(X)$.  
Given  $u\in B_n(X)$, define a cylinder set $[u]$ of length $n$ in $X$  by 
$[u]=\{(x_i)_{i=1}^{\infty} \in X:  x_1\cdots x_n= u\}.$  
A subshift $(X,\sigma_X)$ is {\it irreducible} if for any allowable words $u, v\in B(X)$, there exists  $w\in B(X)$ such that $uwv \in B(X)$, and has the {\it weak specification property} if there exists $k\in\N$ such that for 
any allowable words $u, v \in B(X)$, there exist  $0\leq i\leq k$ and $w\in B_{i}(X)$ such that $uwv\in B(X)$. A subshift has the {\it strong specification property}
if we have $w\in B_{k}(X)$ for each $u, v\in B(X)$. We call such a $k$ a specification number.
 
Given a subshift $(X, \sigma_X)$,  for each $n\in\N$, let $f_n: X\rightarrow \R^{+}$ be a continuous function. Then $\F=\{\log f_n\}_{n=1}^{\infty}$ is a sequence of continuous functions on $X$.
A sequence $\F$ is {\em subadditive} on $X$ if there exists a constant $C\geq 0$ 
such that 
\begin{equation}\label{aa1}
f_{m+n}(x) \leq e^{C}f_m(x) f_{n}(\sigma^m_X x) 
\end{equation}
for every $x\in X$, $n,m\in\N$, and 
it is {\em superadditive} on $X$ if there exists a constant $C\geq 0$ 
such that 
\begin{equation}\label{aa2}
f_{m+n}(x) \geq e^{-C}f_m(x) f_{n}(\sigma^m_X x) 
\end{equation}
for every $x\in X$, $n,m\in\N$. In the theory of the non-additive thermodynamic formalism 
(see for example \cite{b2, m, CFH}), the definitions of the Bowen class, topological pressure and Gibbs measure for continuous 
functions are generalized to those for sequences of continuous  functions.
A sequence $\mathcal{F}= \{ \log f_n \}_{n=1}^{\infty}$ has {\it bounded variation}  if there exists $M 
\geq 1$ such that
 $\sup \{ M_n : n \in \N\} \leq M$ where
 $M_n= \sup \{ {f_n(x)}/{f_n(y)} : x,y  \in X, x_i=y_i \textrm{ for }$
 $1 \leq i \leq n\}.$ 
 For a sequence $\F$ and $n\in\N$, define $Z_n(\F):=\sum_{u\in B_n(X)}\sup\{f_n(x): x\in [u]\}$. 
 We define a topological pressure $P(\F)$ for $\F$ by 
 $P(\F):=\limsup_{n \to \infty}({1}/{n})\log Z_n(\F)$.  Let $M(X, \sigma_X)$ be the set of  invariant Borel probability measures on $X$.
A measure $\mu\in M(X, \sigma_X)$ is an {\em equilibrium state} for $\F$ if 
\begin{equation*}
h_{\mu}(\sigma_X)+\lim_{n\rightarrow\infty}\frac{1}{n}\int\log f_nd\mu
=\sup_{m\in M(X,\sigma_X)}\{h_{m}(\sigma_X)+\lim_{n\rightarrow\infty}\frac{1}{n}\int\log f_ndm\}.
\end{equation*} 
The variational principal holds for subadditive sequences. 
 A Borel probability measure $\mu$ on $X$ is a {\em Gibbs measure} for 
a sequence $\F$ if there exist $P\in \R$ and $C> 0$  such that for each $u\in B_n(X), n\in \N$
\begin{equation}\label{gibbsd}
\frac{1}{C}\leq \frac{\mu([u])}{e^{-nP}f_n(x)}\leq C
\end{equation}
for every $x\in [u]$ and $n\in \N$. More generally, if $\mu([u])/({e^{-nP}f_n(x)})\leq C$, then we say that $\mu$ 
satisfies the upper bound property of 
(\ref{gibbsd}).

It is known by \cite{Fe} that there exists a unique invariant Gibbs measure for a sequence $\F=\{\log f_n\}_{n\in \N}$ with bounded variation 
on a subshift $X$
satisfying the following properties \ref{a0} and \ref{a1}.  
\begin{enumerate}[label=(C\arabic*)]
\item \label{a0} The sequence $\F$ is subadditive. 
\item There exist $k\in\N$ and $0<D<1$ such that  given any $u \in B_m(X), v \in B_n(X)$, $n, m\in\N$, 
there exists $w \in B_i(X), 0\leq i\leq k$ such that  $uwv\in B(X)$ and \label {a1} 
\begin{equation}\label{original}
\sup\{f_{m+n+i}(x): x\in [uwv]\} \geq D \sup\{f_m(x) : x\in [u]\}
\sup\{f_n(x): x\in [v]\}.
\end{equation}
\end{enumerate}
The unique invariant Gibbs measure is a unique equilibrium state for $\F$.
The property of bounded variation  implies that  (\ref{original}) in \ref{a1} is equivalent to:  there exists $0<D'<1$ such that  
\begin{equation}\label{alter}
f_{m+n+i}(x_{uwv}) \geq D' f_m(x_{u}) f_n(x_{v})
\end{equation}
for each choice of $x_{uwv}\in [uwv], x_u\in [u], x_v\in [v]$ 

\begin{rem}
 The sequences with bounded variation satisfying \ref{a0} and \ref{a1} are equivalent to the sequences called 
 quasi-multiplicative sequences of continuous functions (see \cite{Fe}). 
 By setting  
 $\phi_n(x)=\sup\{f_n(x): x\in [u]\}$ we obtain a quasi-multiplicative sequence. The reverse implication is obvious.
\end{rem}
 

\subsection{Balanced properties of a subshift with respect to a sequence of continuous functions} 
The notion of the balanced property and boundedly supermultiplicative property of subshifts was first introduced by Baker and Ghenciu \cite{BG} with respect to the continuous function $f=0$. It was generalized to subshifts with respect to   
arbitrary continuous functions in \cite{kim}.
We  generalize the supermultiplicative property to sequences of continuous functions.
Let $(X, \sigma_X)$ be a one-sided subshift and $\F=\{\log f_n\}_{n\in\N}$ be a sequence of continuous functions on $X$.  
\begin{defi}\label{dBSM}
A subshift $X$ is {\em boundedly supermultiplicative with respect to $\F$ (BSM($\F$))} if there exist $C\geq 1$ and
$K\in \N $
such that for each $m\in\N, n\geq K$  
\begin{equation}\label{bms}
\frac{1}{C}\leq\frac{\sum_{u\in B_m(X)}f_{m}(x_{u})\sum_{v\in B_n(X)}f_n(x_{v})}{\sum_{w\in B_{m+n}(X)}f_{m+n}(x_{w})}\leq C
\end{equation}
for each choice of $x_{u}\in[u]$, $x_{v}\in [v]$, $x_{w}\in [w]$. 
\end{defi}
\begin{rem}\label{kimcase}
The balanced properties and boundedly supermultiplicative property in \cite{BG, kim} are defined for two-sided subshifts. 
To obtain the definitions in \cite{kim}, 
set $K=1$ and let $f_n=e^{S_nf}$ for $f\in C(X)$, where $(S_nf)(x):=\sum_{i=0}^{n-1}f(\sigma_X^{i}x)$ for each $x\in X$, in Definitions \ref{Balance} and  \ref{dBSM}. 
\end{rem} 

If $X$ is one-sided balanced with respect to $\F$, then $\F$ has bounded variation.
To see this, assume that it is right balanced to $\F$ and let $u\in B_m(X)$ and $x, y\in [u]$. Now
replace $x_u$ by $x$ in (\ref{RB}) and then replace $x_u$ by $y$ in (\ref{RB}). Now we first show that the balanced property of a subshift with respect to a sequence of continuous functions is 
a generalization of conditions \ref{a0} and \ref{a1} of a sequence with bounded variation.
 
 \begin{prop}\label{submulti}
Let $\F=\{\log f_n\}_{n\in\N}$ be a sequence of continuous functions with bounded variation on a subshift $X$ 
satisfying \ref{a0} and \ref{a1}.
Then $X$ is balanced with respect to $\F$.
\end{prop}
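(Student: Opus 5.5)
We prove both the right and the left balanced inequalities directly from \ref{a0}, \ref{a1} and bounded variation; we may take $K=1$. Throughout, write $Z_n=\sum_{w\in B_n(X)}f_n(x_w)$. By bounded variation, the choice of the points $x_w\in[w]$ only changes $Z_n$ by a factor at most $M$. The same holds for each $f_{m+n}(x_{uv})$ and for $f_m(x_u)$. So it suffices to prove (\ref{RB}) and (\ref{LB}) for one fixed choice of points, and the resulting constant is multiplied by a power of $M$.

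\emph{Upper bounds.} Let $v\in F_n(u)$ and $x=x_{uv}\in[uv]$. Subadditivity (\ref{aa1}) gives
\[
f_{m+n}(x)\le e^{C}f_m(x)f_n(\sigma_X^m x).
\]
Since $x\in[u]$, we have $f_m(x)\le Mf_m(x_u)$. Since $\sigma_X^m x\in[v]$, we have $f_n(\sigma_X^m x)\le M f_n(x_v)$. Summing over $v\in F_n(u)\subseteq B_n(X)$ gives
\[
\sum_{v\in F_n(u)}f_{m+n}(x_{uv})\le e^CM^2 f_m(x_u)Z_n,
\]
which is the upper bound in (\ref{RB}). For (\ref{LB}), take $v\in P_n(u)$ and $x\in[vu]$. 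Then $f_{n+m}(x)\le e^Cf_n(x)f_m(\sigma_X^n x)$, with $x\in[v]$ and $\sigma_X^nx\in[u]$, and the same argument applies.

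\emph{Lower bounds.} This is the main step, because \ref{a1} joins words only after inserting a connecting word $w$ of length $i\le k$, while $F_n(u)$ requires $uv$ to be admissible with no gap.

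For the right side, fix $u\in B_m(X)$ and $n\ge k$ (small $n$ are absorbed into constants, as explained below). For each $v'\in B_{n-i}(X)$ with $0\le i\le k$, \ref{a1} provides $w$ with $|w|=j\le k$ and $uwv'\in B(X)$. I would instead group each $v\in B_n(X)$ by its suffix. Write $v=ab$ with $|a|=k$ and $b\in B_{n-k}(X)$. Applying \ref{a1} to $u$ and $b$ gives a $w$ with $|w|=j\le k$ and $uwb\in B(X)$. Then $wb$, padded if necessary, is an element of $F_{n-k+j}(u)$, and (\ref{alter}) gives
\[
f_{m+j+n-k}(x_{uwb})\ge D'f_m(x_u)f_{n-k}(x_b).
\]
Summing over $b$ and using BSM-type comparisons derived from \ref{a0}, \ref{a1}, I obtain
\[
\sum_{j=0}^{k}\;\sum_{v\in F_{n-k+j}(u)}f_{m+n-k+j}(x_{uv})\ \ge\ D'f_m(x_u)Z_{n-k}.
\]
To pass from length $n-k+j$ to length $n$, extend each word of $F_{n-k+j}(u)$ by $k-j$ further symbols. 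The map from extended words to their truncations is at most $\#\mathcal{A}^{k}$ to one. Moreover, \ref{a0} together with bounded variation gives
\[
f_{N}(y)\le e^C f_{N'}(y)\max_{l\le k}\|f_l\|_\infty \quad (N-N'\le k).
\]
What is needed, however, is a lower bound of the form $f_{N+l}\gtrsim f_N$. This is supplied by \ref{a1} applied with a one-letter word $v$, combined with subadditivity. Finally, \ref{a0} and \ref{a1} give $Z_{n-k}\asymp Z_n$ up to constants depending on $k$, $D$, $C$ and $M$. This yields the lower bound in (\ref{RB}) for $n\ge k$. For $n<k$, all quantities are bounded above and below by constants after dividing by $f_m(x_u)$.

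The left side is symmetric: join $b$ (now a prefix block) to $u$ through $w$, and use the reversed comparisons.

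I expect the main obstacle to be this length-shift bookkeeping, namely showing that $f_{N+l}\asymp f_N$ on the relevant extensions and that $Z_{n+l}\asymp Z_n$ for $l\le k$. I would first isolate these two comparisons as a preliminary lemma derived from \ref{a0}, \ref{a1} and bounded variation.
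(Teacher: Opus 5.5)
Your proposal is correct and is essentially the paper's proof. The upper bounds come from subadditivity and bounded variation exactly as you do them. For the lower bound, the paper likewise joins $u$ to an $(n-k)$-block via \ref{a1}, repairs the length deficit by a second application of \ref{a1} followed by truncation via subadditivity (this produces its word $v''$), and then compares the partition sums at lengths $n-k$ and $n$ via subadditivity.

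The differences are cosmetic. The paper appends a full $k$-block $v'$, so one application of \ref{a1} already overshoots length $m+n$; with your one-letter block you may need to iterate up to $k$ times before truncating. Also, your small-$n$ case (needed for $K=1$) is not automatic: it requires this same extension lemma, whereas the paper simply takes $K=k+1$.
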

\begin{proof}
Since $\F$ has bounded variation, there exists $M\geq 1$ such that  $f_n(x)/f_n(y)\leq M$ for every $x, y\in [u],
u \in B_n(X), n\in \N$. 
Let $k$ be defined from \ref{a1}. We first show that there exists $C_1>1$ such that for every $m\in\N$, 
$n\geq k+1$ and $u\in  B_m(X)$
\begin{equation}
\frac{1}{C_1}\leq\frac{\sum_{v\in F_n(u)}f_{m+n}(x_{uv})}{f_m(x_u)\sum_{w\in B_{n}(X)}f_{n}(x_{w})}
\end{equation}
for each choice of $x_{u}\in[u]$, $x_{uv}\in [uv]$, $x_{w}\in [w]$. 
Since $\F$ has bounded variation, 
by \ref{a1} , there exist $0<D'<1$ and $k\in \N$ such that 
given $u\in B_m(X)$ and $v\in B_{n-k}(X)$, $n\geq k+1$, there exists $w\in B_p(X)$, $0\leq p\leq k$, such that 
$f_{m+n-k+p}(x_{uwv})\geq D'f_m(x_u)f_{n-k}(x_v)$, for each choice of $x_{uwv}\in [uwv], x_u\in [u], x_v\in [v]$. 
Take such a $w$. If $p<k$, then take $v'\in B_k(X)$. Then given $uwv$, where $\vert w\vert <k$, and $v' \in B_k(X)$, there exists $w'\in B_{p'}(X)$,
$0\leq p'\leq k$ such that 
\begin{equation}
\begin{split}
f_{m+n+p +p'}(x_{uwvw'v'})&\geq D' f_{m+n+p-k}(x_{uwv})f_k(x_{v'})\\& \geq {D'}^2f_{m}(x_{u})f_{n-k}(x_v)f_k(x_v')\\
\end{split}
\end{equation}
for each choice of $x_{uwvw'v'}\in [uwvw'v']$, $x_{uwv}\in [uwv]$, $x_{v'}\in [v'], x_u\in [u], x_v\in [v]$.  
Now write $w'=w_1'\dots w'_{p'}$ and $v'=v_1'\dots v_k'$ and define $v'':=[w'v']_{1}^{k-p}$ and $v''':=[w'v_1]_{k-p+1}^{p'+k}. $
Then by subadditivity and bounded variation, 
\begin{equation} \label{eqk0}
\begin{split}
f_{m+n}(x_{uwvv''})f_{p+p'}(x_{v'''})e^CM^2&\geq f_{m+n+p'+p}(x_{uwvw'v'})\\
&\geq {D'}^2 f_m(x_u)f_{n-k}(x_v)f_k(x_{v'})
\end{split}
\end{equation}
for each choice of $x_{uwvv''}\in [uwvv''], x_{v'''}\in [v'''], x_{uwvw'v'}\in [uwvw'v'], x_u\in [u], x_v\in [v], x_{v'}\in [v']$,
where $C$ is defined in (\ref{aa1}).
Hence 
\begin{equation}\label{eqk1}
f_{m+n}(x_{uwvv''})\geq \frac{f_m(x_u)f_{n-k}(x_v)\min\{f_k(x): x\in X\}{D'}^2}{e^C M^2\max_{0\leq i\leq 2k} \{f_i(x): x\in X\}}.
\end{equation}
where $f_0(x)=1$ for all $x\in X$.
Define $$D'':=\min\{\frac{\min\{f_k(x): x\in X\}{D'}^2}{e^CM^2 \max_{0\leq i\leq 2k} \{f_i(x): x\in X\}}, D'\}.$$
Let $y_u\in [u]$. By (\ref{eqk1}) and the case when $p=k$
\begin{equation} 
\begin{split}
&\frac{\sum_{v\in F_n(u)}f_{m+n}(x_{uv})}{f_m(y_u)\sum_{r\in B_{n}(X)}f_{n}(x_{r})}\\
&\geq 
\frac{\sum_{v\in B_{n-k}(X), \vert w\vert<k}f_{m+n}(x_{uwvv''})+\sum_{v\in B_{n-k}(X), \vert w\vert=k}f_{m+n}(x_{uwv})}
{Mf_m(y_u)\sum_{r\in B_{n}(X)}f_{n}(x_{r})}  \label{inf}\\
&\geq
\frac{D''f_m(x_u)\sum_{v\in B_{n-k}(X)}f_{n-k}(x_v)}{Mf_m(y_u)\sum_{r\in B_{n}(X)}f_{n}(x_{r})}
\geq \frac{D''}{M^4e^CZ_k(\F)}
\end{split}
\end{equation}
where in the first inequality in (\ref{inf}), $w, v''$ are chosen for each $v\in B_{n-k}(X)$  in the manner described above.
Next we show the upper bound inequality in (\ref{RB}). 
For each $n,m\in\N$, let $u\in B_m(X)$ and $v\in F_n(u)$.
Then for each $x_{uv}\in [uv]$
\begin{equation*}
f_{m+n}(x_{uv})\leq f_m(x_{uv}) f_{n}(\sigma^m_X(x_{uv}))e^C
\leq \sup\{f_m(x): x\in [u]\} \sup\{f_{n}(x): x\in [v]\}e^C.
\end{equation*}
Hence
\begin{equation*}
\begin{split}
\frac{\sum_{v\in F_n(u)}f_{m+n}(x_{uv})}{f_m(x_u)\sum_{w\in B_{n}(X)}f_{n}(x_{w})} 
 &\leq \frac{\sup\{f_{m}(x): x\in [u]\}\sum_{v\in F_n(u)}\sup\{f_{n}(x): x\in [v]\} e^C}{f_m(x_u)\sum_{w\in B_{n}(X)}f_{n}(x_{w})}
\\ &\leq M^2e^C.
\end{split}
\end{equation*}
Hence  $X$ is right balanced with respect to $\F$. Similar arguments show that $X$ is 
left balanced with respect to $\F$.
\end{proof}

\section{Existence of Gibbs measures for sequences of continuous functions on subshifts} 
The main result of this section is Theorem \ref{main} on a necessary and sufficient condition for 
the existence of invariant Gibbs measures for sequences of continuous functions on subshifts and 
for the existence of Gibbs measures. 
Throughout this section, let $\F=\{\log f_n\}_{n\in\N}$ be a sequence of continuous functions on a one-sided subshift $X$.

\begin{lemma}\label{implication}
If $X$ is one-sided balanced with respect to $\F$, then $X$ is BSM($\F$).  
 \end{lemma}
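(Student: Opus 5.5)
Suppose $X$ is right balanced with respect to $\F$, with constants $C\geq 1$ and $K\in\N$ as in (\ref{RB}); the left balanced case is symmetric. The idea is to sum inequality (\ref{RB}) over all $u\in B_m(X)$. The key combinatorial fact is that every word of length $m+n$ splits uniquely as $uv$, with $u\in B_m(X)$ and $v\in F_n(u)$. In the left balanced case one splits it as $vu$ with $u\in B_m(X)$ and $v\in P_n(u)$. This gives a bijection
\begin{equation*}
B_{m+n}(X)=\{uv: u\in B_m(X),\ v\in F_n(u)\}.
\end{equation*}
Hence $\sum_{u\in B_m(X)}\sum_{v\in F_n(u)} f_{m+n}(x_{uv})=\sum_{w\in B_{m+n}(X)}f_{m+n}(x_w)$ for any choice of points $x_w\in[w]$.

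For fixed $m\in\N$ and $n\geq K$, fix arbitrary choices $x_u\in[u]$ for $u\in B_m(X)$, $x_v\in[v]$ for $v\in B_n(X)$, and $x_w\in[w]$ for $w\in B_{m+n}(X)$. Apply (\ref{RB}) to each $u\in B_m(X)$, using these same points. This is legitimate because (\ref{RB}) holds for every choice of points, and the point chosen for the word $uv$ is just the point $x_w$ with $w=uv$. We obtain
\begin{equation*}
\frac{1}{C} f_m(x_u)\sum_{v\in B_n(X)}f_n(x_v)\leq \sum_{v\in F_n(u)}f_{m+n}(x_{uv})\leq C f_m(x_u)\sum_{v\in B_n(X)}f_n(x_v).
\end{equation*}
Sum over $u\in B_m(X)$. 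The middle term becomes $\sum_{w\in B_{m+n}(X)}f_{m+n}(x_w)$, and the outer terms become $C^{\pm1}\sum_{u}f_m(x_u)\sum_{v}f_n(x_v)$. Rearranging gives exactly (\ref{bms}), with the same $C$ and $K$.

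In the left balanced case we use (\ref{LB}) and the decomposition $w=vu$. The roles of the length-$m$ and length-$n$ pieces are then swapped in position, but (\ref{bms}) is symmetric in the product $\sum_u\sum_v$, so the same conclusion follows.

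There is no real obstacle here. The only point needing care is that the choice of points $x_{uv}$ in (\ref{RB}) must be allowed to be arbitrary and consistent with a single global choice $x_w$. This is guaranteed because the definition quantifies over every choice of points, and all $f_n$ are positive, so the summations are harmless.
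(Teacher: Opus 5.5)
Your proof is correct and is essentially the paper's argument, which is adapted from Kim: sum (\ref{RB}) over $u\in B_m(X)$ and use that each $w\in B_{m+n}(X)$ splits uniquely as $uv$ with $v\in F_n(u)$, which gives (\ref{bms}) with the same $C$ and $K$. Your remarks on choosing the points consistently and on the left balanced case (where the split is $w=vu$ and the product $\sum_u\sum_v$ is symmetric) fill in details the paper leaves implicit.
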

\begin{proof}
We apply the proof found in \cite{kim}.
Assume that $X$ is right balanced to $\F$. 
In (\ref{RB}),  taking the sum  over all $x_{u}\in [u]$,$u\in B_m(X)$, we obtain that for $n\geq K$
\begin{equation*}
\begin{split}
\frac{1}{C}\sum_{u\in B_m(X)} \big[f_m(x_u) \sum_{w\in B_{n}(X)}f_{n}(x_{w}) \big]&\leq 
\sum_{u\in B_m(X)} [\sum_{v\in F_n(u)}f_{m+n}(x_{uv})]\\
&\leq C \sum_{u\in B_m(X)} \big[f_m(x_u) \sum_{w\in B_{n}(X)}f_{n}(x_{w}) \big].
\end{split}
\end{equation*}
The result follows immediately. 
\end{proof}

\begin{lemma}\label{charBSM}
A subshift  $X$ is BSM($\F$)  if and only if there exist $C\geq1$ and $P\in \R$ such that for each $n\in \N$ 
\begin{equation}\label{key1}
\frac{1}{C}\leq \frac{e^{nP}}{\sum_{u\in B_n(X)}f_n(x_u)}\leq C
\end{equation}
for each choice of  $x_u\in [u]$.
\end{lemma}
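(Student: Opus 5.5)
The plan is to prove the converse direction by direct substitution and the forward direction by a Fekete-type argument applied to $Z_n(\F)$.

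\emph{Converse direction.} Suppose (\ref{key1}) holds. For any $m,n\in\N$ and any choices $x_u,x_v,x_w$, we have
$$\sum_{u\in B_m(X)}f_m(x_u)\sum_{v\in B_n(X)}f_n(x_v)\Big/\sum_{w\in B_{m+n}(X)}f_{m+n}(x_w)\in\big[C^{-3},C^{3}\big],$$
because each sum lies within a factor $C$ of $e^{mP}$, $e^{nP}$ and $e^{(m+n)P}$ respectively, and $e^{mP}e^{nP}=e^{(m+n)P}$. Hence $X$ is BSM($\F$) with $K=1$ and constant $C^3$.

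\emph{Forward direction, step 1: reduce to $Z_n(\F)$.} Assume $X$ is BSM($\F$) with constants $C,K$. Each $f_n$ is continuous and each cylinder $[u]$ is compact, so the supremum in $Z_n(\F)$ is attained. Write $S_n(\{x_u\})=\sum_{u\in B_n(X)}f_n(x_u)$ for an arbitrary choice $\{x_u\}$.

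Fix $n\ge K$, $m=n$, and a choice on $B_{2n}(X)$. Apply (\ref{bms}) with two different choices in the $B_m$ factor and identical choices elsewhere. The ratio of the two choice-sums is then at most $C^2$. Applying this with the maximizing choice gives $C^{-2}Z_n(\F)\le S_n(\{x_u\})\le Z_n(\F)$ for every $n\ge K$, since $m$ ranges over all of $\N$ in the BSM definition. For the finitely many $n<K$, every sum is positive and finite, so $S_n$ and $Z_n(\F)$ are comparable with a constant depending only on $n$.

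Consequently it suffices to prove $e^{nP}\asymp Z_n(\F)$. Specializing (\ref{bms}) to maximizing choices gives, for all $m\ge1$ and $n\ge K$,
$$C^{-1}Z_{m+n}(\F)\le Z_m(\F)Z_n(\F)\le C\,Z_{m+n}(\F).$$

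\emph{Forward direction, step 2: define $P$.} Fix $n\ge K$ and iterate with $m=(j-1)n$; each step uses $n\ge K$. This yields
$$C^{-(j-1)}Z_n(\F)^j\le Z_{jn}(\F)\le C^{j-1}Z_n(\F)^j,$$
so that $\big|\tfrac1{jn}\log Z_{jn}(\F)-\tfrac1n\log Z_n(\F)\big|\le \tfrac{\log C}{n}$.

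Now take $n,n'\ge K$ and compare both with $nn'$, which is a multiple of each. This gives
$$\Big|\tfrac1n\log Z_n(\F)-\tfrac1{n'}\log Z_{n'}(\F)\Big|\le \tfrac{\log C}{n}+\tfrac{\log C}{n'}.$$
So $\tfrac1n\log Z_n(\F)$ is Cauchy, and its limit $P$ exists; in particular $P=P(\F)$.

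\emph{Forward direction, step 3: bounds and constants.} Letting $n'\to\infty$ in the last inequality gives $|\log Z_n(\F)-nP|\le\log C$ for every $n\ge K$. The finitely many $n<K$ are absorbed by enlarging the constant. Combining this with step 1 yields (\ref{key1}) for every choice of $x_u$, with constant roughly $C^3$ times a finite correction.

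The only delicate point is that BSM is assumed only for $n\ge K$. The plan handles this by always placing the factor of length $\ge K$ in the "$n$" slot and treating $n<K$ by finiteness.
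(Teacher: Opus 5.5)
Your proof is correct. The converse direction and Step 1 match the paper: you vary the choice in one factor of (\ref{bms}) while freezing the others, which reduces everything to $Z_n(\F)$.

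The routes diverge in how $P$ is extracted.
\begin{itemize}
\item The paper turns approximate multiplicativity into subadditivity of $\log(d^6Z_n(\F))$ and superadditivity of $\log(Z_n(\F)/d^6)$ for $n,m\ge K$. It then invokes a ``slightly modified'' Fekete lemma twice, stated without proof. It reads $e^{nP}\le d^6Z_n(\F)$ off the infimum formula and $Z_n(\F)\le d^6e^{nP}$ off the supremum formula.
\item You compare $Z_{jn}(\F)$ with $Z_n(\F)^j$ along multiples, and cross-compare any $n,n'\ge K$ through $nn'$. This yields the Cauchy property and the two-sided bound $|\log Z_n(\F)-nP|\le \log C$ in one stroke.
\end{itemize}

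What your route buys:
\begin{itemize}
\item It is self-contained and avoids the restricted-range Fekete lemma the paper only asserts. The multiples trick only ever needs the second slot of (\ref{bms}) to have length $\ge K$, so the restriction to $n\ge K$ causes no trouble.
\item Its constants are sharper. Plugging the maximizing points directly into (\ref{bms}) gives $C^{-1}Z_{m+n}(\F)\le Z_m(\F)Z_n(\F)\le C Z_{m+n}(\F)$, instead of the paper's $d^6$ obtained by chaining (\ref{key4}). The final constant is $C^3$ rather than $d^8$.
\end{itemize}
The paper's approach is more standard and identifies $P$ as an infimum/supremum of the normalized sequences, but that extra information is not needed for the lemma.

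One presentational point about Step 1. Freezing the $n$-slot at $n=K$ and varying the $m$-slot already gives $C^{-2}Z_m(\F)\le S_m\le Z_m(\F)$ for every $m\in\N$. So the finiteness argument for small indices is only needed in Step 3, to compare $Z_n(\F)$ with $e^{nP}$ for the finitely many $n<K$. Where you do use it, the uniformity over choices comes from $\min_X f_n>0$, which holds because $f_n$ is continuous and positive on the compact set $X$.
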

\begin{rem}\label{aboutP}
If $X$ is BSM ($\F$), we obtain $P=P(\F)$ by the proof below.
\end{rem}
\begin{proof}
 We modify the arguments found in  \cite[Lemma 3.11]{kim} slightly since we consider an arbitrary sequence $\F$.
We first show if part. The equation  (\ref{key1}) implies that  
\begin{equation*}
\frac{1}{C^2}\leq \frac{e^{(n+m)P}}{\sum_{u\in B_m(X)}f_m(x_u)\sum_{v\in B_n(X)}f_n(x_v)}\leq  C^2.
\end{equation*}
Applying (\ref{key1}) again by replacing $n$ by $n+m$, we obtain for each $m, n\in\N$
  \begin{equation*}
\frac{1}{C^3}\leq \frac{\sum_{w\in B_{m+n}(X)}f_{m+n}(x_w)}{\sum_{u\in B_{m+n}(X)}f_m(x_u)\sum_{v\in B_n(X)}f_n(x_v)}\leq  C^3.
\end{equation*}
Now we show the reverse direction. 
Let $m\in \N$ and $v\in B_m(X)$. Let $x_v, y_v\in [v].$  
Then there exists $d\geq 1$ such that  for $n\geq K$,  $K\in\N$
\begin{equation}\label{key2}
\frac{1}{d}\leq\frac{\sum_{u\in B_n(X)}f_{n}(x_{u})\sum_{v\in B_m(X)}f_m(x_{v})}{\sum_{w\in B_{m+n}(X)}f_{m+n}(x_{w})}\leq d
\end{equation}
for each choice of $x_u\in [u], x_w\in [w]$.
Hence fixing $n\geq K$
\begin{equation}\label{re}
\frac{1}{d^2}\leq\frac{\sum_{v\in B_m(X)}f_{m}(x_{v})}{\sum_{v\in B_m(X)}f_m(y_{v})}\leq {d}^2.
\end{equation}
Since each $[u]$, $u\in B_n(X),$ is a compact subset of $X$,  $\sup_{x\in [u]}f_{n}(x)$ attains a maximum at a point $\bar{x} \in [u]$. 
Taking such a point from each $[u]$, (\ref{re}) implies that for each $n\in\N$
\begin{equation}\label{key4}
\frac{1}{d^2} \sum_{u\in B_n(X)}f_{n}(y_{u})\leq  \sum_{u\in B_n(X)} \sup_{x\in [u]}f_{n}(x)\leq d^2{\sum_{u\in B_n(X)}f_n(y_{u})}.
\end{equation}
This implies that 
\begin{equation*} 
\frac{1}{d^6}\leq \frac{Z_m(\F)Z_n(\F)}{Z_{m+n}(\F)}\leq d^6.
\end{equation*}
for every $m\in\N$, $n\geq K$. Let $a_n=\log(d^6Z_n(\F))$. Then $a_{n+m}\leq a_n+a_m$  for every $n,m\geq  K$.
Modifying Fekete's subadditive lemma slightly to this case, we obtain that 
$$\lim_{n\to \infty}\frac{1}{n}\log (d^6Z_n(\F))=\inf_{n\geq K}\frac{1}{n}\log (d^6Z_n(\F))\leq \frac{1}{K}\log (d^6Z_K(\F)).$$ 
Since the sequence $\{\log (Z_n(\F)/d^6)\}_{n\geq K}$ satisfies $a_{n+m}\geq a_n+a_m$  for every $n,m\geq  K$,  we obtain\\
$$\lim_{n\to \infty}\frac{1}{n}\log (\frac{Z_n(\F)}{d^6})=\sup _{n\geq K}\frac{1}{n}\log (\frac
{Z_n(\F)}{d^6})\geq \frac{1}{K}\log (\frac{Z_K(\F)}{d^6}).$$
Thus $lim_{n\to \infty}a_n/n$ exists and define $P:=P(\F)=\lim_{n\to \infty}(1/n)
\log (d^6Z_n(\F)).$ Then 
$(1/n)\log (d^6Z_n(\F))\geq P$ for every $n\geq K$  and
$d^6Z_n(\F)\geq e^{nP}$ for every $n\geq K$. 
Using (\ref{key4}), we obtain 
$\sum_{u\in B_n(X)}f_n(x_u)\geq Z_n(\F)/{d^2}\geq e^{nP}/{d^8}$   for every  $n\geq K.$
Similarly, we obtain
$\sum_{u\in B_n(X)}f_n(x_u)\leq Z_n(\F)\leq d^6 e^{nP}$ for all $n\geq K.$
Hence (\ref{key1}) holds for $n\geq K$ by taking $C=d^8$.  Since 
$f_n$ is continuous on a compact set $X$, 
${e^{nP}}/{\sum_{u\in B_n(X)}f_n(x_u)}$ is bounded for $1\leq n \leq K-1.$

\end{proof}

 
\begin{thm}\label{main}
Let $\F=\{\log f_n\}_{n\in\N}$ be a sequence of continuous functions on a subshift $X$. Then
\begin{enumerate}[label=(\roman*)]
\item \label{m1}
There exists a Gibbs measure for $\F$ if and only if $X$ is right balanced with respect to $\F$. 
\item \label{m2}
There exists an invariant Gibbs measure for $\F$ if and only if $X$ is balanced with respect to $\F$.  If such an invariant measure $\mu$ exists for $\F$, then $P(\F)\in \R$ and 
\begin{equation} \label{vp}
P(\F)=h_{\mu}(\sigma_X)+\lim_{n\to \infty}\frac{1}{n}\int \log f_n d\mu.
\end{equation}
\end{enumerate}
\end{thm}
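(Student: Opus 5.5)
For part \ref{m1}, I would first prove necessity. Suppose $\mu$ is Gibbs for $\F$ with constants $P,C$. Since $X=\bigsqcup_{w\in B_n(X)}[w]$, summing (\ref{gibbsd}) over $w$ gives $\sum_{w\in B_n(X)} f_n(x_w)\asymp e^{nP}$. Also $[u]=\bigsqcup_{v\in F_n(u)}[uv]$, so
\[
\mu([u])=\sum_{v\in F_n(u)}\mu([uv])\asymp e^{-(m+n)P}\sum_{v\in F_n(u)}f_{m+n}(x_{uv}).
\]
On the other hand $\mu([u])\asymp e^{-mP}f_m(x_u)$. Dividing these two estimates yields (\ref{RB}) with $K=1$.

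For sufficiency, assume $X$ is right balanced. By Lemma \ref{implication}, $X$ is BSM($\F$), and then Lemma \ref{charBSM} gives $P$ with $\sum_{w\in B_n}f_n(x_w)\asymp e^{nP}$ for all $n$. Fix points $x_w\in[w]$ and define
\[
\mu_N:=e^{-NP}\sum_{w\in B_N(X)} f_N(x_w)\,\delta_{x_w}.
\]
These are finite measures with total mass in $[1/C',C']$; normalize them and take a weak$^*$ limit point $\mu$. For a cylinder $[u]$ with $u\in B_m$ and $N\ge m+K$, we have
\[
\mu_N([u])=e^{-NP}\sum_{v\in F_{N-m}(u)}f_N(x_{uv}),
\]
and (\ref{RB}) with $n=N-m$ bounds this between constants times $e^{-NP}f_m(x_u)e^{(N-m)P}=e^{-mP}f_m(x_u)$. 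Cylinders are clopen, so these bounds pass to the limit, and $\mu$ is Gibbs. The bounded-variation consequence of balancedness handles the freedom in choosing $x_u$.

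For part \ref{m2}, necessity proceeds as in \ref{m1}, with one extra step. Invariance gives $\mu([u])=\mu(\sigma_X^{-n}[u])=\sum_{v\in P_n(u)}\mu([vu])$, and the same computation yields (\ref{LB}). For sufficiency, take the Gibbs measure $\nu$ from \ref{m1} and form the Cesàro averages $\frac1N\sum_{j<N}\nu\circ\sigma_X^{-j}$. Any weak$^*$ limit $\mu$ is invariant. To keep the Gibbs bounds, it suffices to show $\nu(\sigma_X^{-j}[u])\asymp e^{-mP}f_m(x_u)$ uniformly in $j$. Write
\[
\nu(\sigma_X^{-j}[u])=\sum_{v\in P_j(u)}\nu([vu])\asymp e^{-(j+m)P}\sum_{v\in P_j(u)}f_{j+m}(x_{vu}).
\]
For $j\ge K$ we apply (\ref{LB}) together with Lemma \ref{charBSM} to get $\asymp e^{-mP}f_m(x_u)$. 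The finitely many $j<K$ do not affect the Cesàro limit. This uniform-in-$j$ estimate is the crux of the argument, and it is exactly where left balancedness enters.

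Finally, for (\ref{vp}), Lemma \ref{charBSM} together with Remark \ref{aboutP} gives $P=P(\F)\in\R$. Taking logarithms in the Gibbs property gives
\[
-\frac1n\sum_{u\in B_n}\mu([u])\log\mu([u])=P-\frac1n\int\log f_n\,d\mu+O(1/n),
\]
using bounded variation to replace $f_n(x_u)$ by $f_n$ on $[u]$. The left side tends to $h_\mu(\sigma_X)$, since for an invariant $\mu$ the partition into $1$-cylinders is generating. Hence $\lim\frac1n\int\log f_n\,d\mu$ exists and equals $P(\F)-h_\mu(\sigma_X)$, which is (\ref{vp}). The main obstacle throughout is keeping constants uniform across the first $K$ levels and across the choices of representative points.
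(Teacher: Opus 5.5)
Your proposal is correct and follows essentially the same route as the paper. For necessity you sum the Gibbs bounds over $[u]=\bigsqcup_{v\in F_n(u)}[uv]$ (and, using invariance, over $\sigma_X^{-n}[u]=\bigsqcup_{v\in P_n(u)}[vu]$); for sufficiency you get a Gibbs measure as a weak$^*$ limit of $f_N$-weighted atomic measures via right balancedness and Lemma \ref{charBSM}, then Ces\`aro-average its push-forwards, using left balancedness for a bound on $\nu(\sigma_X^{-j}[u])$ that is uniform in $j$. The differences are cosmetic: you apply Lemma \ref{charBSM} directly at level $N-m$ rather than passing through the BSM inequality, and you spell out the entropy computation behind (\ref{vp}), which the paper only asserts.
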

\begin{proof}
We first show \ref{m1}. Suppose there exists a Gibbs measure $\mu$ for $\F$ with a constant $C_0$. 
Hence,  for each $u\in B_m(X), m, n\in \N$, 
\begin{equation}\label{g1}
(1/C_0) e^{(n+m)P}\mu[u]\leq \sum_{v\in F_n(u)}f_{m+n}(x_{uv})\leq C_0e^{(n+m)P}\mu[u]
\end{equation}
 for each $x_{uv}\in [uv]$.
\begin{equation}\label{g2}
(1/C_0) e^{mP}\mu[u]\leq f_{m}(x_{u})\leq C_0e^{mP}\mu[u] \text{ for each } x_{u}\in [u].
\end{equation}
\begin{equation}\label{g3}
(1/C_0) e^{nP}\leq \sum _{v\in B_n(X)} f_{n}(x_{v})\leq C_0e^{nP} \text{ for each } x_{v}\in [v].
\end{equation}
Therefore by (\ref{g1}), (\ref{g2}), (\ref{g3})
\begin{equation*}
\frac{1}{C_0^3}\leq \frac{\sum_{v\in F_n(u)}f_{m+n}(x_{uv})}{f_m(x_u)
\sum _{v\in B_n(X)} f_{n}(x_{v}) }\leq C_0^3.
\end{equation*}
Hence $X$ is right balanced with respect to $\F$. 
For the other implication, 
suppose that  (\ref{RB}) holds for $n\geq K$ for some $K\in \N$ and for a constant $C$. 
Then $X$ is BSM($\F$). By the proof of Lemma \ref{implication}, for each $m\in\N, n\geq K$ we have  
\begin{equation}\label{use1}
\frac{1}{C}\leq\frac{\sum_{u\in B_m(X)}f_{m}(x_{u})\sum_{v\in B_n(X)}f_n(x_{v})}{\sum_{w\in B_{m+n}(X)}f_{m+n}(x_{w}
)}\leq C
\end{equation}
for each choice of $x_{u}\in[u]$, $x_{v}\in [v]$, $x_{w}\in [w]$, and there exists $d_1$ such that for each $n\in\N$
\begin{equation}\label{use2}
\frac{1}{d_1}\leq \frac{e^{nP}}{\sum_{u\in B_n(X)}f_n(x_u)}\leq d_1
\end{equation}
for each choice of  $x_u\in [u]$.
For all $n\in \N$, let $A_n$ be a set consisting exactly one point from each cylinder of length $n$ in X. 
Define the Borel probability measure $\nu_n$ on X concentrated on $A_n$ by
\begin{equation}
\nu_n= \frac{\sum_{x\in A_n}f_{n}(x)\delta_{x}}{\sum_{x\in A_n}f_n(x)}
\end{equation}
 where $\delta_{x}$ is the Dirac measure at $x$. 
 For 
 each $u\in B_n(X)$, denote by $\bar{x}_u$ a particular point chosen from $[u]$ in $A_n$.
Let $u\in B_m(X)$, $m\in\N$, be fixed.
For $n> m+K$, using (\ref{RB}), (\ref{use1}) and (\ref{use2}),
\begin{equation}\label{mea1}
\begin{split}
{\nu}_n([u])&=\frac{\sum_{v\in F_{n-m}(u)}f_{n}(\bar{x}_{uv})}{\sum_{w\in B_n(X)}f_n(\bar{x}_w)}
\leq \frac{Cf_{m}({x}_{u})\sum_{v\in B_{n-m}(X)}f_{n-m}({x}_{v})}{\sum_{w\in B_n(X)}f_n(\bar{x}_w)}\\
&\leq 
\frac{C^2f_{m}({x}_{u})}{{\sum_{w\in B_m(X)}f_m(\bar{x}_w)}}\leq C^2d_1f_m(x_u)e^{-mp}
\end{split}
\end{equation}
for each $x_u\in [u]$. Similarly, (\ref{RB}), (\ref{use1}) and (\ref{use2}) imply 
\begin{equation}\label{mea2}
{\nu}_n([u])\geq \frac{f_m(x_u)e^{-mp}}{C^2d_1}  \text{     for each } x_u\in [u]. 
\end{equation}
Let $\tilde{C}=C^2d_1.$
Taking a subsequence $\{\nu_{n_k}\}_{k\in\N}$ of $\{\nu_{n}\}_{n\in\N}$ converging to a measure $\nu$, letting $k\to \infty$, we obtain for each 
$x_u\in [u], u\in B_n(X), n\in \N$, 
\begin{equation}\label{gib}
\frac{1}{\tilde{C}}\leq \frac{\nu[u]}{e^{-np}f_n(x_u)}\leq \tilde{C}.
\end{equation}
Hence $\nu$ is a Gibbs measure for $\F$. 
Next we show \ref{m2}. Suppose there exists an invariant Gibbs measure $\mu$. It is easy to show 
the left balanced property by a proof similar to that of \ref{m1} by using the 
invariance of the Gibbs measure.  To show the reverse implication, 
let $\nu$ be the Gibbs measure in (\ref{gib}) constructed in the proof 
of \ref{m1}. Without loss of generality assume that (\ref{LB}) holds for 
$n\geq K$  and $C$ from (\ref{RB}). 
Since any weak limit point of $\{(1/n)\sum_{l=0}^{n-1}\nu\circ \sigma^{-l}_X\}_{n\in \N}$ is a $\sigma_X$-invariant
Borel probability measure,  let $\mu$ be the limit point of a convergent subsequence 
$\{(1/n_k)\sum_{l=0}^{n_k-1}\nu\circ \sigma^{-l}_X\}_{k\in \N}$.
Let $u\in B_m(X)$, $m\in\N$.
For $l>m+K$,  using (\ref{LB}), (\ref{use1}) and (\ref{use2}),
\begin{equation}\label{ugibbs}
\begin{split}
&\nu(\sigma^{-l}_X([u]))=\sum_{v\in P_l(u)}\nu([vu])\leq \tilde{C}e^{-(l+m)P}\sum_{v\in P_l(u)}f_{l+m}(x_{vu})\\
&\leq  \tilde{C}e^{-(l+m)P}Cf_m(x_u)\sum_{v\in B_l(X)}f_{l}(x_{v})
 \leq  \tilde{C}Cd_1e^{-mP}f_m(x_u).
 \end{split}
 \end{equation} 

\begin{equation}
\begin{split}
&\nu(\sigma^{-l}_X([u]))=\sum_{v\in P_l(u)}\nu([vu])\geq (1/\tilde{C})e^{-(l+m)P}\sum_{v\in P_l(u)}f_{l+m}(x_{vu})\\
&\geq  (1/(\tilde{C}C))e^{-(l+m)P}f_m(x_u)\sum_{v\in B_l(X)}f_{l}(x_{v})
 \geq  (1/(\tilde{C}Cd_1))e^{-mP}f_m(x_u).
 \end{split}
 \end{equation} 
Hence for each fixed $u\in B_{m}(X)$, $n > m+K$, 
\begin{equation*}
\frac{1}{n}\sum_{l=m+K+1}^{n-1}\frac{e^{-mP}f_m(x_u)}{\tilde{C}d_1C}\leq \frac{1}{n}
\sum_{l=0}^{n-1}\nu(\sigma^{-l}_X([u]))\leq \frac{m+K+1}{n}+\frac{1}{n}\sum_{l=m+K+1}^{n-1}
\tilde{C}d_1Ce^{-mP}f_m(x_u).
\end{equation*}
 Replacing $n$ by $n_k$ and letting $n_k\to \infty$, a simple computation shows that $\mu$ is an invariant Gibbs measure for $\F$.
Note that $P=P(\F)$ by Remark \ref{aboutP}.  We obtain (\ref{vp}) by the Gibbs property. 
Note that the Gibbs property of $\mu$ and bounded variation of $\F$ imply
that $\limsup_{n\to \infty}(1/n)\int \log f_n d\mu=\liminf_{n\to \infty}({1}/{n})\int \log f_n d\mu$.
 \end{proof}
 \begin{coro}
 A subshift $X$ is  balanced with respect to a sequence of continuous functions $\F$ if and only if 
$X$ is balanced with respect to $\F$ with $K=1$ in (\ref{RB}) and (\ref{LB}).
A subshift $X$ is right balanced with respect to $\F$ if and only if 
$X$ is right balanced with respect to $\F$ with $K=1$ in (\ref{RB}).
\end{coro}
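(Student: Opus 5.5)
The plan is to route everything through Theorem \ref{main}. One direction of each equivalence is trivial: balance with $K=1$ is a special case of balance with some $K\in\N$, since the inequalities for all $n\geq 1$ include those for $n\geq K$. So the content is the forward direction: if $X$ is (right) balanced with respect to $\F$ for some $K$ and $C$, then (\ref{RB}) (and (\ref{LB})) in fact hold for every $n\geq 1$, with a possibly larger constant.

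For the right balanced statement, assume $X$ is right balanced with respect to $\F$. By Theorem \ref{main}\ref{m1} there is a Gibbs measure $\nu$ for $\F$, with constants $P\in\R$ and $\tilde C$ as in (\ref{gibbsd}). Next I rerun the necessity argument from the proof of \ref{m1}, which used only the Gibbs inequalities and never required $n\geq K$. Concretely, for every $m,n\in\N$ and $u\in B_m(X)$, additivity of $\nu$ over the disjoint cylinders $[uv]$, $v\in F_n(u)$, gives $\nu([u])=\sum_{v\in F_n(u)}\nu([uv])$. The Gibbs bounds then give (\ref{g1}) and (\ref{g2}). Summing the Gibbs inequality over $w\in B_n(X)$ and using $\sum_w\nu([w])=1$ gives (\ref{g3}). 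Combining these yields
\[
\frac{1}{\tilde C^3}\leq\frac{\sum_{v\in F_n(u)}f_{m+n}(x_{uv})}{f_m(x_u)\sum_{w\in B_n(X)}f_n(x_w)}\leq \tilde C^3
\]
for all $n\geq1$. This is exactly (\ref{RB}) with $K=1$.

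For the balanced statement, assume $X$ is balanced with respect to $\F$. Theorem \ref{main}\ref{m2} then provides an invariant Gibbs measure $\mu$. Since $\mu$ is in particular Gibbs, the previous paragraph gives (\ref{RB}) with $K=1$. For (\ref{LB}) I use invariance: $\mu([u])=\mu(\sigma_X^{-n}[u])=\sum_{v\in P_n(u)}\mu([vu])$, which is valid for every $n\geq1$ because $\sigma_X^{-n}[u]$ is the disjoint union of the cylinders $[vu]$ with $vu\in B(X)$. Applying the Gibbs bounds to each $[vu]$ and to $[u]$, together with (\ref{g3}), gives (\ref{LB}) for all $n\geq 1$ with constant $\tilde C^3$.

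The only step needing care is justifying that the Gibbs constants are uniform in all $n\geq 1$, including $n<K$. This is already built into the construction in Theorem \ref{main}: the bounds (\ref{mea1}) and (\ref{mea2}) hold for every fixed $m$ once $n$ is large, and (\ref{use2}) holds for all $n$. So no extra work is needed beyond invoking the theorem.
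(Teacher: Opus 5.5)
Your proposal is correct and is the same argument the paper intends. The paper's one-line proof also obtains a (invariant) Gibbs measure from Theorem \ref{main} and then notes that the necessity part of that proof uses only the Gibbs inequalities, plus invariance for (\ref{LB}), and never needs $n\geq K$; this gives (\ref{RB}) and (\ref{LB}) with $K=1$ and constant $C_0^3$.
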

\begin{proof}
The result is obvious by the first part of the proof of Theorem \ref{main} \ref{m1}  \ref{m2}.
\end{proof}

 \begin{coro}\label{main2}
 A subshift $X$ is right balanced with respect to 
 $f\in C(X)$ if and only if 
there exists a Gibbs measure for $f\in C(X)$.
 A subshift $X$ is balanced with respect to 
 $f\in C(X)$ if and only if 
there exists an invariant Gibbs measure for $f\in C(X)$.
\end{coro}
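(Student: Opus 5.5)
This is a direct specialization of Theorem \ref{main}. Given $f\in C(X)$, I will take $\F=\{\log f_n\}_{n\in\N}$ with $f_n:=e^{S_nf}$, where $S_nf=\sum_{i=0}^{n-1}f\circ\sigma_X^i$ as in Remark \ref{kimcase}. The plan is to check that each notion for the function $f$ matches the corresponding notion for $\F$, and then apply the theorem.

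\textbf{Step 1 (continuity).} Each $f_n=e^{S_nf}$ is a finite composition of continuous maps, so it is continuous and strictly positive on $X$. Hence $\F$ is an admissible sequence of continuous functions in the sense of Section \ref{seqmany}.

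\textbf{Step 2 (Gibbs measures coincide).} A measure $\mu$ is Gibbs for $f$ when there are $P$ and $C$ with
\[
C^{-1}\leq \mu([u])\,e^{nP-S_nf(x)}\leq C
\]
for all $u\in B_n(X)$ and $x\in[u]$. Substituting $f_n=e^{S_nf}$ turns this into exactly (\ref{gibbsd}) for $\F$. So the Gibbs measures for $f$ and for $\F$ are the same measures. The same holds for invariant Gibbs measures, since invariance does not involve $f$.

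\textbf{Step 3 (balanced notions coincide).} I take the right and left balanced properties of $X$ with respect to $f$ to mean Definition \ref{Balance} with $f_n=e^{S_nf}$, following Remark \ref{kimcase}. In Kim's original definition one also has $K=1$. The Corollary preceding this statement shows that $X$ is right balanced (respectively balanced) with some $K$ if and only if it is so with $K=1$. Therefore the choice of $K$ does not matter.

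\textbf{Step 4 (apply the theorem).} With these identifications, Theorem \ref{main}\ref{m1} gives the first equivalence and Theorem \ref{main}\ref{m2} gives the second.

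I expect no step to be a serious obstacle. The only points that need care are the bookkeeping about $K$, which the previous corollary handles, and stating explicitly that "Gibbs for $f$" means Gibbs for the sequence $\{S_nf\}_{n\in\N}$.
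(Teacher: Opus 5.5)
Your proposal is correct and is the same argument the paper intends. The corollary is stated without proof as the specialization of Theorem \ref{main} to $f_n=e^{S_nf}$ via Remark \ref{kimcase}, and the preceding corollary removes the difference between a general $K$ and Kim's $K=1$, exactly as in your Steps 2--4.
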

   
See \cite[Example 3.8]{kim} for an example of  a right balanced  two-sided subshift  with respect to a 
continuous function $f$ which belongs to the Bowen class, which is not left 
balanced.





\begin{coro}\label{nice}
Let $\mu\in M(X, \sigma_X)$. Then 
the measure $\mu$ is Gibbs for a sequence of continuous functions on a subshift $X$ if and only if 
$X$ is balanced with respect to the sequence $\G=\{\log \mu[x_1\dots x_n]\}_{n\in \N}$.\
\end{coro}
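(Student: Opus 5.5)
The plan is to observe that the sequence $\G=\{\log g_n\}_{n\in\N}$, with $g_n(x):=\mu[x_1\dots x_n]$, is essentially tailor-made for $\mu$. Both balanced conditions (\ref{RB}) and (\ref{LB}) for $\G$ should then hold with ratio identically $1$, provided every cylinder has positive $\mu$-measure. So the proof reduces to checking that positivity of cylinders is exactly what each side of the equivalence provides. One can also route both directions through Theorem \ref{main}\ref{m2}, but the direct computation seems cleaner.

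First I will make the standing assumption explicit. For $\G$ to be a sequence of continuous functions in the sense of Section \ref{seqmany}, each $g_n$ must map $X$ to $\R^{+}$. Continuity is automatic, since $g_n$ is constant on each cylinder of length $n$, and these cylinders are clopen. Positivity means $\mu[u]>0$ for every $u\in B(X)$. The expressions in (\ref{RB}) and (\ref{LB}) also divide by $g_m(x_u)$, so positivity is implicit in the hypothesis that $X$ is balanced with respect to $\G$.

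For the direction ``Gibbs $\Rightarrow$ balanced'', suppose $\mu$ is Gibbs for some sequence $\F=\{\log f_n\}$ with constants $P$ and $C$. Since $f_n>0$, (\ref{gibbsd}) gives $\mu[u]\geq C^{-1}e^{-nP}f_n(x)>0$ for every $u\in B_n(X)$, so $\G$ is a legitimate sequence. I will then verify the balanced property directly with $K=1$ and constant $1$. On the right,
\[
\sum_{v\in F_n(u)}g_{m+n}(x_{uv})=\sum_{v\in F_n(u)}\mu[uv]=\mu[u]=g_m(x_u),
\]
because the cylinders $[uv]$ with $v\in F_n(u)$ partition $[u]$. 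On the left, $\sum_{w\in B_n(X)}g_n(x_w)=1$. On the left-sided sum, the sets $[vu]$ with $v\in P_n(u)$ partition $\sigma_X^{-n}[u]$, so
\[
\sum_{v\in P_n(u)}\mu[vu]=\mu(\sigma_X^{-n}[u])=\mu[u]
\]
by $\sigma_X$-invariance of $\mu$. Hence both ratios in (\ref{RB}) and (\ref{LB}) equal $1$. Alternatively, $\mu$ is an invariant Gibbs measure for $\G$ with $P=0$ and constant $1$, and Theorem \ref{main}\ref{m2} then gives that $X$ is balanced with respect to $\G$.

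For the converse, assume $X$ is balanced with respect to $\G$. As noted above, this presupposes $g_n>0$. Then $\mu[u]/(e^{-n\cdot 0}g_n(x))=1$ for every $x\in[u]$, so $\mu$ is Gibbs for the sequence $\G$ itself, which is a sequence of continuous functions. There is no genuine technical obstacle here. The only delicate point is the bookkeeping about positivity of cylinder measures, since it is what makes $\G$ an admissible sequence. Invariance is used only in the left-balanced identity $\mu(\sigma_X^{-n}[u])=\mu[u]$, or equivalently in invoking part \ref{m2} of Theorem \ref{main} rather than part \ref{m1}.
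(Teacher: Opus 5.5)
Your proof is correct and matches the paper's reasoning. The paper's one-line argument is that $\mu$ is trivially Gibbs for $\G$ (with $P=0$ and constant $1$), so Theorem \ref{main}\ref{m2} applies; this is your alternative route, and your direct check that both ratios in (\ref{RB}) and (\ref{LB}) equal $1$ is the same computation the paper carries out in Example \ref{ex1}. You also make explicit the positivity of cylinder measures, which the paper leaves implicit. It is needed for $\G$ to be an admissible sequence, and in the forward direction it is supplied by the Gibbs bound for the given $\F$.
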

\begin{proof}
This is obvious because $\mu$ is Gibbs for $\G$. 
\end{proof}
\begin{coro}\label{onesidedGibbs}
Suppose that a subshift  $X$ is BSM($\F$). If 
there exist $C \geq 1$ and $K\in \N$ such that for each $m\in \N, n\geq K,
u\in B_m(X),$ the upper bound (resp. lower bound) inequalities in (\ref{RB}) and (\ref{LB}) hold, then  
there exists an invariant measure $\mu$ which satisfies the upper bound (resp. lower bound) property of (\ref{gibbsd}).
\end{coro}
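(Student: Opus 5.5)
The plan is to follow the two-step construction in the proof of Theorem \ref{main}, keeping only the one-sided estimates. Since $X$ is BSM($\F$), Lemma \ref{charBSM} gives $d_1\geq 1$ and $P=P(\F)$ with
\[
\frac{1}{d_1}\leq \frac{e^{nP}}{\sum_{u\in B_n(X)}f_n(x_u)}\leq d_1
\]
for every $n\in\N$ and every choice of $x_u\in[u]$. This is the only place where both bounds are used. It replaces the step in the proof of Theorem \ref{main} where the two-sided inequality (\ref{RB}) was used to deduce BSM($\F$) and then (\ref{use2}). I treat the upper bound case; the lower bound case is identical with all inequalities reversed.

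\textbf{Step 1: a non-invariant measure.} Define $\nu_n$ as in the proof of Theorem \ref{main}\ref{m1}, concentrated on representative points $A_n$ and weighted by $f_n$. Fix $u\in B_m(X)$ and take $n>m+K$. Apply only the upper inequality of (\ref{RB}), with $n-m$ in place of $n$, to the numerator of $\nu_n([u])$:
\[
\nu_n([u])\leq \frac{C f_m(x_u)\sum_{v\in B_{n-m}(X)}f_{n-m}(x_v)}{\sum_{w\in B_n(X)}f_n(\bar x_w)}.
\]
Then bound the ratio of partition sums using the display from Lemma \ref{charBSM} twice. This gives $\sum_{v\in B_{n-m}(X)} f_{n-m}(x_v)\leq d_1 e^{(n-m)P}$ and $\sum_{w\in B_n(X)} f_n(\bar x_w)\geq e^{nP}/d_1$. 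Hence $\nu_n([u])\leq C d_1^2 e^{-mP}f_m(x_u)$.

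Now pass to a weak limit $\nu$ along a subsequence. Cylinders are clopen, so $\nu_{n_k}([u])\to\nu([u])$. Therefore $\nu([u])\leq \tilde C e^{-mP}f_m(x_u)$ for all $m\in\N$, $u\in B_m(X)$ and $x_u\in[u]$, with $\tilde C=Cd_1^2$.

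\textbf{Step 2: averaging to an invariant measure.} Take $\mu$ a weak limit of $\frac1n\sum_{l=0}^{n-1}\nu\circ\sigma_X^{-l}$; such a limit is $\sigma_X$-invariant. For $l\geq K$, write $\nu(\sigma_X^{-l}[u])=\sum_{v\in P_l(u)}\nu([vu])$. Apply the Step 1 bound to each cylinder $[vu]$ of length $l+m$, then the upper inequality of (\ref{LB}), then Lemma \ref{charBSM} once more:
\[
\nu(\sigma_X^{-l}[u])\leq \tilde C e^{-(l+m)P}\sum_{v\in P_l(u)}f_{l+m}(x_{vu})\leq \tilde C C e^{-(l+m)P}f_m(x_u)\sum_{v\in B_l(X)}f_l(x_v)\leq \tilde C C d_1 e^{-mP}f_m(x_u).
\]
The finitely many terms with $l<K$ are bounded by $1$, so their contribution $K/n$ vanishes in the Cesàro limit. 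Passing to the limit along the subsequence, again using that cylinders are clopen, gives $\mu([u])\leq \tilde C C d_1 e^{-mP}f_m(x_u)$. This is the upper bound property of (\ref{gibbsd}).

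\textbf{Main obstacle.} The point needing care is that the argument never invokes the missing reverse inequality of (\ref{RB}) or (\ref{LB}). Every denominator must be bounded via Lemma \ref{charBSM} rather than via (\ref{use1}), which in Theorem \ref{main} was derived from two-sided balance. This is exactly why BSM($\F$) is assumed separately. In the lower bound case there is one extra subtlety: the terms with $l<K$ have no useful lower bound, but dropping them costs only a factor $(n-K)/n\to1$, so the constant is unaffected.
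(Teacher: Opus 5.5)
Your proposal is correct and follows the paper's proof, which simply says that BSM($\F$) lets one rerun the construction of Theorem \ref{main}: weighted point masses $\nu_n$, a weak limit $\nu$, then Ces\`aro averages of $\nu\circ\sigma_X^{-l}$, keeping only the one-sided estimates from (\ref{RB}) and (\ref{LB}) and controlling partition sums via Lemma \ref{charBSM}. One small remark: you say (\ref{use1}) must be avoided, but it is literally the BSM($\F$) inequality, which is now a hypothesis, so you could use it directly as the paper does; your two applications of Lemma \ref{charBSM} are equally valid.
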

\begin{proof}
Since $X$ is BSM($\F$), we apply the proof of Theorem \ref{main} to construct an invariant measure $\mu$
satisfying the upper bound (lower bound) property of (\ref{gibbsd}). 
\end{proof}


 
\section{Theorem \ref{main} for subadditive and superadditive sequences} \label{onlysub}
In this section, we study Theorem \ref{main} for subadditive and superaddive sequences $\F$. In particular, 
if $\F$ is superadditive on a subshift with the strong specification,  
an invariant Gibbs measure for $\F$ is a unique invariant Gibbs measure and it is ergodic. We also give examples which illustrate Theorem \ref{main}.
In the following theorems, let $\F=\{\log f_n\}_{n\in\N}$ be a sequence of continuous functions on 
a one-sided subshift $X$. If $\F$ has bounded variation, let $M$ be a number such that
$\sup_{n\in\N} \{ {f_n(x)}/{f_n(y)} : x,y  \in X, x_i=y_i \textrm{ for }1 \leq i \leq n\}\leq M$.

\begin{thm}\label{mishiro}
Suppose $\F$ is subadditive on a subshift $X$. Then  
\begin{enumerate}[label=(\roman*)]
\item \label{s1}
$X$ is right balanced with respect to $\F$ if and only if  $\F$ has bounded variation and there exist $C_1$, $0<C_1<1$ and $k\in\N$ such that  for every $n, m\in \N$, $u\in B_m(X)$,  
\begin{equation}\label{ss1}
\sum_{v\in F_{n+k}(u)}f_{m+n+k}(x_{uv})\geq C_1 f_{m}(x_u)\sum_{v\in B_n(X)}f_{n}(x_v)
\end{equation}
for each choice of  $x_{uv}\in [uv]$, $x_u\in [u]$ and $x_v\in [v]$. 
\item \label{s2}
$X$ is left balanced with respect to $\F$ if and only if $\F$ has bounded variation and there exist $C_1$, $0<C_1<1$ and $k\in\N$ such that  for every $n, m\in \N$, $u\in B_m(X)$,  
\begin{equation}\label{ss2}
\sum_{v\in P_{n+k}(u)}f_{m+n+k}(x_{vu})\geq C_1 f_{m}(x_u)\sum_{v\in B_n(X)}f_{n}(x_v)
\end{equation}
for each choice of  $x_{vu}\in [vu]$, $x_u\in [u]$ and $x_v\in [v]$. 
\item \label{s3}
$X$ is balanced with respect to $\F$ if and only if $\F$ has bounded variation and equations (\ref{ss1}) and 
(\ref{ss2}) hold.
\item \label{s4}
If $\F$ has bounded variation and the sequence $\{\log Z_n(\F)\}_{n\in \N}$ is superadditive, 
then there exists an equilibrium state $\mu$ for $\F$ satisfying the upper bound property of (\ref{gibbsd}).
\end{enumerate}
\end{thm}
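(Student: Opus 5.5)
The plan is to prove (i) directly from the definitions, using subadditivity for the upper estimates and (\ref{ss1}) plus a shift of the length by $k$ for the lower ones. Then (ii) is the mirror image, (iii) is the conjunction of the two, and (iv) goes through Corollary \ref{onesidedGibbs} together with the subadditive variational principle. Throughout, $C$ is the subadditivity constant from (\ref{aa1}), $M$ is the bounded variation constant, and I write $S_n:=\sum_{w\in B_n(X)}f_n(x_w)$ for a choice of points $x_w\in[w]$. I use repeatedly that, under bounded variation, $S_n$ is within a factor $M$ of $Z_n(\F)$ whatever the points $x_w$ are.

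For (i), \emph{necessity}. Bounded variation follows from the right balanced property, as observed after Remark \ref{kimcase}. Let $K,C$ be the right balanced constants and put $k=K$. Applying (\ref{RB}) with length $n+k\geq K$ gives $\sum_{v\in F_{n+k}(u)}f_{m+n+k}(x_{uv})\geq C^{-1}f_m(x_u)S_{n+k}$. By Lemma \ref{implication}, $X$ is BSM($\F$); applying (\ref{bms}) with first index $n$ and second index $k=K$ gives $S_{n+k}\geq C^{-1}S_nS_k$. Finally $S_k\geq\min_X f_k>0$, so (\ref{ss1}) follows.

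For (i), \emph{sufficiency}. The upper bound in (\ref{RB}) holds for every $n$, exactly as in the proof of Proposition \ref{submulti}: $f_{m+n}(x_{uv})\leq e^Cf_m(x_{uv})f_n(\sigma^m_Xx_{uv})\leq e^CM^2f_m(x_u)f_n(x_v)$, and summing over $v\in F_n(u)\subseteq B_n(X)$ gives the claim. For the lower bound, take $n\geq k+1$ and apply (\ref{ss1}) with $n-k$ in place of $n$; this gives $\sum_{v\in F_n(u)}f_{m+n}(x_{uv})\geq C_1f_m(x_u)S_{n-k}$. It remains to show $S_n\leq c\,S_{n-k}$. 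Subadditivity and bounded variation give, for $w=w'w''$ with $|w'|=n-k$,
\[ f_n(x_w)\leq e^CM\,f_{n-k}(x_{w'})\max_X f_k. \]
Each $w'$ has at most $\ell^k$ extensions, where $\ell$ is the alphabet size. Hence $S_n\leq e^CM\ell^k\max_X f_k\,S_{n-k}$, and (\ref{RB}) holds with $K=k+1$.

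Part (ii) is the same argument with prefixes and suffixes exchanged. For the upper bound I use $f_{m+n}(x_{vu})\leq e^Cf_n(x_{vu})f_m(\sigma^n_Xx_{vu})\leq e^CM^2f_n(x_v)f_m(x_u)$. For the lower bound I use (\ref{ss2}) together with the same comparison $S_n\lesssim S_{n-k}$. For necessity I again use Lemma \ref{implication}, which only needs one-sided balance. Part (iii) is (i) and (ii) combined.

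For (iv), I first establish BSM($\F$). Superadditivity of $\log Z_n(\F)$ gives $Z_{m+n}\geq Z_mZ_n$ up to a constant. Subadditivity gives $Z_{m+n}\leq e^CZ_mZ_n$, since $\sup_{[uv]}f_{m+n}\leq e^C\sup_{[u]}f_m\sup_{[v]}f_n$. Bounded variation then lets me replace $Z_n$ by $S_n$ with arbitrary points, so (\ref{bms}) holds. The upper bound inequalities of (\ref{RB}) and (\ref{LB}) hold by the computations above, so Corollary \ref{onesidedGibbs} yields an invariant $\mu$ with $\mu[u]\leq \tilde C e^{-nP}f_n(x)$ for all $x\in[u]$. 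Here $P=P(\F)$ by Remark \ref{aboutP}, and the limit $\lim_n (1/n)\int\log f_n\,d\mu$ exists by subadditivity and Fekete's lemma. Taking $-\log$ of the upper bound, integrating against $\mu$, and summing over $u\in B_n(X)$ gives
\[ H_\mu\Big(\bigvee_{i=0}^{n-1}\sigma_X^{-i}\mathcal P\Big)\geq nP-\int\log f_n\,d\mu-\log\tilde C, \]
where $\mathcal P$ is the partition into $1$-cylinders. Dividing by $n$ gives $h_\mu(\sigma_X)+\lim_n(1/n)\int\log f_n\,d\mu\geq P(\F)$.

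The main obstacle is the reverse inequality: $h_m(\sigma_X)+\lim_n(1/n)\int\log f_n\,dm\leq P(\F)$ for every $m\in M(X,\sigma_X)$. For this I would invoke the subadditive variational principle of Cao--Feng--Huang \cite{CFH}. Doing so requires checking that the pressure defined there agrees with $P(\F)=\limsup_n(1/n)\log Z_n(\F)$ in our setting. This should follow from bounded variation, since then $\sup_{[u]}f_n$ and the spanning/separated-set sums defining the \cite{CFH} pressure differ by at most a bounded factor. Once that is in place, $\mu$ attains the supremum and is therefore an equilibrium state for $\F$.
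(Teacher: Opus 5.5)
Your proposal is correct and follows the paper's proof essentially step for step. Necessity in (i)--(ii) uses (\ref{RB})/(\ref{LB}) at length $n+k$ together with BSM($\F$) from Lemma \ref{implication}. Sufficiency uses the subadditive upper bound and the comparison $S_n\lesssim S_{n-k}$, where your $\ell^k\max_X f_k$ plays the role of the paper's $Z_k(\F)$. Part (iv) goes through BSM($\F$), the one-sided Gibbs construction and the Cao--Feng--Huang variational principle, exactly as in the paper. Your explicit entropy estimate and your check that the \cite{CFH} pressure equals $\limsup_n (1/n)\log Z_n(\F)$ only fill in details the paper leaves implicit; that identification in fact holds on subshifts even without bounded variation.
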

\begin{proof}
Recall that if $X$ is one-sided balanced with respect to $\F$, then it has bounded variation. We show \ref{s1}. If $X$ is right balanced with respect to $\F$, define $C\geq 1$ and  $k=K\in\N$ from (\ref{RB}).  Then, for every $n, m\in \N$, $u_m\in B_m(X)$,  
$\sum_{v\in F_{n+k}(u)}f_{m+n+k}(x_{uv})\geq (1/C) f_{m}(x_u)\sum_{w\in B_{n+k}(X)}f_{n+k}(x_w)$
for each choice of  $x_{uv}\in [uv]$, $x_u\in [u]$ and $x_w\in [w]$.  By the proof of Lemma \ref{implication}, for each $n\geq 1$
\begin{equation*} 
C{\sum_{w\in B_{n+k}(X)}f_{n+k}(x_{w})}\geq   \sum_{v\in B_n(X)} f_n(x_{v}) \sum_{r\in B_k(X)}f_{k}(x_{r})\geq \frac{Z_{k}(\F)\sum_{v\in B_n(X)} f_n(x_{v})}{M}
\end{equation*} 
where $x_w\in [w]$, $x_v\in [v]$, and  $x_{r}\in [r]$.
Hence  
\begin{equation*}
\frac{1}{C}f_{m}(x_u)\sum_{w\in B_{n+k}(X)}f_{n+k}(x_w)\geq \frac{Z_{k}(\F)}{MC^2}f_{m}(x_u)\sum_{v\in B_n(X)}f_n(x_{v}).
\end{equation*}
To show the reverse implication, first notice that the upper bound inequality of (\ref{RB}) holds (see at the end of the poof of
Proposition \ref{submulti}).
Since $\F$ is subadditive, define $C$ from (\ref{aa1}). Now take $n\geq k+1$ and write $n=l+k, l\in\N$. Let $y_u\in [u]$.
Then 
\begin{equation*} 
\begin{split}
&\frac{\sum_{v\in F_{n}(u)}f_{m+n}(x_{uv})}{f_m(x_u)\sum_{v\in B_n(X)} f_n(x_{v})}=
\frac{\sum_{v\in F_{l+k}(u)}f_{m+l+k}(x_{uv})}{f_m(x_u)\sum_{v\in B_{l+k}(X)} f_{l+k}(x_{v})}\\
&\geq \frac{C_1f_m(y_u)\sum_{v\in B_{l}(X)}f_{l}(x_{v})}{f_m(x_u)\sum_{v\in B_{l+k}(X)} f_{l+k}(x_{v})}
\geq \frac{C_1}{e^CM^2Z_k(\F)}
\end{split}
\end{equation*}
where in the last inequality we use the subadditive property of $\F$. Now set $K=k+1$ in (\ref{RB}).
Similar arguments show \ref{s2}. \ref{s3} follows immediately.  To see \ref{s4}, first note 
that since $\F$ is subadditive, the following variational principal holds \cite{CFH}: 
\begin{equation*}
P(\F)=\sup_{\mu\in M(X,\sigma_X)}\{h_{\mu}(\sigma_X)+\lim_{n\to \infty}\frac{1}{n}\int \log f_n d \mu\}.
\end{equation*}
Since $\F$ is subadditive with bounded variation, by the final part of the proof of Proposition \ref{submulti},
the upper bound property of (\ref{RB}) holds. Similarly,  the upper bound property of (\ref{LB}) holds for every $n,m\in\N$. It is easy to see that the lower bound property of (\ref{bms}) also holds for every $n,m\in\N$. Since $\{\log Z_n(\F)\}_{n\in \N}$ is superadditive and $\F$ has bounded variation, $X$ is BSM($\F$). The proof of Theorem \ref{main} implies that there exists a measure $\nu$ satisfying the upper bound property in (\ref{gib}). Hence (\ref{ugibbs}) holds for each $l>m$ and by the proof of Theorem \ref{main} there exists an invariant measure $\mu$ satisfying the upper bound property of (\ref{gibbsd}).
Noting that $P=P(\F)$ in (\ref{gibbsd}) (see Remark \ref{aboutP}), we obtain that $P(\F)\leq h_{\mu}(\sigma_X)+\lim_{n\to \infty}(1/n)\log f_n d\mu$. 
 
\end{proof}

Next we characterize quasi-multiplicative sequences using the balanced property.
\begin{prop}\label{ncl}
Let $\F$ be a subadditive sequence on a subshift $X$ with bounded variation. 
Then $\F$ satisfies \ref{a1} in which $B_i(X)$ is replaced by $B_k(X)$ where $k$ is defined in \ref{a1} if and only if
$X$ is right balanced with respect to $\F$ and 
there exists $\tilde{C}>0$ such that  for every $n, m\in \N$, $u\in B_m(X)$,  $v\in B_n(X)$, $k$ defined from (\ref{ss1}),
\begin{equation}\label{chq}
\frac{f_n(x_v)\sum_{w\in F_{n+k}(u)}f_{m+n+k}(x_{uw})}{ \sum_{w\in B_n(X)}f_{n}(x_w)} \leq \tilde{C}
\sum_{\substack{uCv\in B_{m+n+k}(X) \\ C\in B_{k}(X)}} f_{m+n+k}(x_{uCv})
\end{equation}
for each choice of  $x_v\in [v], x_{w}\in [w], x_{uw}\in [uw]$, $x_{uCv}\in [uCv]$. 
\end{prop}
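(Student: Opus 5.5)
We prove the two implications separately. Throughout, $M$ is the bounded variation constant, $C$ the subadditivity constant from (\ref{aa1}), and, by bounded variation, we may pass freely between $\sup_{[u]} f_m$ and $f_m(x_u)$ at the cost of powers of $M$.

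\emph{($\Rightarrow$)} Assume \ref{a1} holds with connecting words of length exactly $k$. In the form (\ref{alter}), for every $u\in B_m(X)$ and $v\in B_n(X)$ there is $C_{uv}\in B_k(X)$ with $uC_{uv}v\in B(X)$ and
\[
f_{m+n+k}(x_{uC_{uv}v})\ge D' f_m(x_u)f_n(x_v).
\]

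\begin{enumerate}
\item \emph{Right balanced.} For fixed $u$, the words $C_{uv}v$ with $v\in B_n(X)$ are distinct elements of $F_{n+k}(u)$. Summing the display over $v$ gives (\ref{ss1}) with $C_1=D'$. Theorem \ref{mishiro}\ref{s1} then gives right balance.
\item \emph{Inequality (\ref{chq}).} The right-hand side of (\ref{chq}) is at least $\tilde C\, f_{m+n+k}(x_{uC_{uv}v})\ge \tilde C D' f_m(x_u)f_n(x_v)$. For the left-hand side, the upper bound in (\ref{RB}), proved at the end of Proposition \ref{submulti}, gives
\[
\sum_{w\in F_{n+k}(u)}f_{m+n+k}(x_{uw})\le M^2e^C f_m(x_u)\sum_{w\in B_{n+k}(X)}f_{n+k}(x_w).
\]
By subadditivity, $\sum_{B_{n+k}}f_{n+k}\le e^CM^2 Z_k(\F)\sum_{B_n}f_n$. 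Hence the left-hand side of (\ref{chq}) is at most $\mathrm{const}\cdot f_m(x_u)f_n(x_v)$, and (\ref{chq}) follows with a suitable $\tilde C$.
\end{enumerate}

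\emph{($\Leftarrow$)} Assume right balance and (\ref{chq}). By Theorem \ref{mishiro}\ref{s1}, (\ref{ss1}) holds with some $k$ and $C_1$; this is the $k$ appearing in (\ref{chq}). Combining (\ref{ss1}) with the left-hand side of (\ref{chq}) gives
\[
\sum_{C\in B_k(X),\,uCv\in B(X)} f_{m+n+k}(x_{uCv})\ \ge\ \frac{C_1}{\tilde C}\, f_m(x_u) f_n(x_v).
\]
The sum on the left has at most $|B_k(X)|$ terms, so one connector $C\in B_k(X)$ carries a term of size at least $C_1 f_m(x_u)f_n(x_v)/(\tilde C\,|B_k(X)|)$. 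This is (\ref{alter}) with connectors of length exactly $k$. Bounded variation then upgrades it to (\ref{original}), after adjusting the constant so that $D<1$.

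The main obstacle is bookkeeping rather than any single step. The $k$ in \ref{a1} must match the $k$ in (\ref{ss1}). In the forward direction this is arranged by using (\ref{ss1}) with the \ref{a1} value of $k$ directly from the construction in step 1, instead of taking the $k=K$ that Theorem \ref{mishiro} would otherwise produce. In the backward direction, the $k$ is read off from (\ref{ss1}) as the statement itself specifies.
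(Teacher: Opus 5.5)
Your proof is correct and follows the paper's argument essentially step for step. In one direction you sum the exact-length-$k$ connecting inequality over $v$ to get (\ref{ss1}), hence right balance via Theorem \ref{mishiro}\ref{s1}, and you bound the left side of (\ref{chq}) by a constant times $f_m(x_u)f_n(x_v)$ using subadditivity and bounded variation; in the other you combine (\ref{ss1}) with (\ref{chq}) and pigeonhole over the at most $|B_k(X)|$ connectors, with the same care about keeping one $k$ throughout \ref{a1}, (\ref{ss1}) and (\ref{chq}).
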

\begin{rem}
The right balanced property of $X$ with 
respect to $\F$ and (\ref{chq})  imply that $X$ is left balanced with 
respect to $\F$. 
\end{rem}
\begin{proof}
Suppose $X$ is a subshift over $S$ symbols. By (\ref{ss1}) and (\ref{chq}), we obtain 
\begin{equation*} 
\begin{split}
\sum_{\substack{uCv\in B_{m+n+k}(X)\\C\in B_{k}(X)}} f_{m+n+k}(x_{uCv})&\geq 
\frac{f_n(x_v)\sum_{w\in F_{n+k}(u)}f_{m+n+k}(x_{uw})}{ \tilde{C}\sum_{w\in B_n(X)}f_{n}(x_w)} \\
&\geq \frac{C_1}{\tilde{C}M} {f_{m}(x_u)f_n(x_v)}.
\end{split}
\end{equation*} 
Since the maximum number of the allowable words of length $k$ is $S^k$, $\F$ satisfies \ref{a1} in which $B_i(X)$ is replaced by   $B_k(X)$. 
For the reserve implication, define $k$ from \ref{a1}. 
Then clearly (\ref{ss1}) holds. 
Define $C$ from (\ref{aa1}). Since $\F$ is subadditive and has bounded variation, it is easy to see that 
\begin{equation*} 
\begin{split}
\frac{f_n(x_v)\sum_{w\in F_{n+k}(u)}f_{m+n+k}(x_{uw})}{\sum_{w\in B_n(X)}f_{n}(x_w)}\leq  M^2Z_k(\F)e^{2C}f_m(x_u)f_n(x_v)
\end{split}
\end{equation*}
or each choice of $x_v\in [v], x_u\in [u], x_{uw}\in [uw], 
x_w\in[w]$. Now the result follows. 
 \end{proof}


\begin{thm}\label{mishiroU}
Suppose $\F$ is superadditive on a subshift $X$ satisfying the strong specification property with a specification number $k\in \N$. Then  
\begin{enumerate}[label=(\roman*)]
\item \label{sp1}
$X$ is right balanced with respect to $\F$ if and only if $\F$ has bounded variation and there exists $C_1\geq 1$ such that  for every $n, m\in \N$, $u\in B_m(X)$,  
\begin{equation}\label{sup1}
\sum_{v\in F_{n+k}(u)}f_{m+n+k}(x_{uv})\leq C_1 f_{m}(x_u)\sum_{v\in B_n(X)}f_{n}(x_v)
\end{equation}
for each choice of  $x_{uv}\in [uv]$, $x_u\in [u]$ and $x_v\in [v]$. 
\item \label{sp2}
$X$ is left balanced with respect to $\F$ if and only if $\F$ has bounded variation and
there exists $C_1\geq 1$ such that  for every $n, m\in \N$, $u\in B_m(X)$,  
\begin{equation}\label{sup2}
\sum_{v\in P_{n+k}(u)}f_{m+n+k}(x_{vu})\leq C_1 f_{m}(x_u)\sum_{v\in B_n(X)}f_{n}(x_v)
\end{equation}
for each choice of  $x_{vu}\in [vu]$, $x_u\in [u]$ and $x_v\in [v]$. 

\item \label{sp3}
$X$ is balanced with respect to $\F$ if and only if $\F$ has bounded variation and equations (\ref {sup1}) and (\ref {sup2}) hold.
\item \label{sp4}
$X$ is balanced with respect to $\F$ if and only if there exists a unique invariant Gibbs measure
for $\F$. 
\item \label{sp5}
If $X$ is balanced with respect to $\F$, then the variational principal holds:
\begin{equation*}
P(\F)=\sup_{\mu\in M(X,\sigma_X)}\{h_{\mu}(\sigma_X)+\lim_{n\to \infty}\frac{1}{n}\int \log f_n d \mu\},
\end{equation*}
and the unique invariant Gibbs measure
for $\F$ is an ergodic equilibrium state for $\F$.  
 \end{enumerate}
\end{thm}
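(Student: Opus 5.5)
We follow the template of Theorem \ref{mishiro}, with the roles of upper and lower bounds exchanged. Superadditivity together with strong specification gives the lower bound in (\ref{RB}) for free, and the upper bound becomes the real condition.

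\emph{Parts \ref{sp1}--\ref{sp3}.} First assume $\F$ has bounded variation. Take $u\in B_m(X)$, $n\in\N$ and $v\in B_{n}(X)$. Strong specification gives $w\in B_k(X)$ with $uwv\in B(X)$. Superadditivity (\ref{aa2}) applied twice, together with bounded variation, gives
\[
f_{m+k+n}(x_{uwv})\geq e^{-2C}M^{-3}\min_X f_k\, f_m(x_u)f_n(x_v).
\]
The map $v\mapsto wv$ is at most $S^k$-to-one into $F_{n+k}(u)$, where $S$ is the number of symbols. Hence
\[
\sum_{v'\in F_{n+k}(u)}f_{m+n+k}(x_{uv'})\geq c\, f_m(x_u)\sum_{v\in B_n(X)}f_n(x_v).
\]
Next, superadditivity gives $f_{n+k}\geq e^{-C}f_n\cdot f_k\circ\sigma^n$, and also an upper bound: for $n\ge K=k+1$, using specification and superadditivity we compare $\sum_{B_{n}}f_{n}$ with $\sum_{B_{n-k}}f_{n-k}$ up to constants. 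Concretely, $\sum_{B_n}f_n\ge c'\sum_{B_{n-k}}f_{n-k}$ always holds. For the reverse inequality we use that each word of length $n$ restricts to a word of length $n-k$; since superadditivity gives only a lower bound on $f_n$, we instead feed in the assumed upper bound (\ref{sup1}) with $m=0$-type reasoning. If that fails, we apply (\ref{sup1}) with a single-letter $u$ and sum over $u$.

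With this comparison, writing $n=l+k$, the assumed (\ref{sup1}) gives the upper bound in (\ref{RB}), and the specification estimate above gives the lower bound. Conversely, right balance yields bounded variation (as noted before Proposition \ref{submulti}). It also yields (\ref{sup1}) via the BSM estimate of Lemma \ref{implication}: we get $\sum_{B_{n+k}}f_{n+k}\le C\sum_{B_n}f_n\sum_{B_k}f_k\le CZ_k(\F)\sum_{B_n}f_n$. Part \ref{sp2} is symmetric, using $P_n(u)$ and specification on the left ($vwu$), and part \ref{sp3} is the combination of \ref{sp1} and \ref{sp2}.

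\emph{Parts \ref{sp4}--\ref{sp5}.} Existence of an invariant Gibbs measure $\mu$ is Theorem \ref{main}\ref{m2}. For uniqueness and ergodicity we show that any invariant Gibbs measure $\mu$ is ergodic, and that two Gibbs measures are mutually absolutely continuous (their cylinder masses are comparable, hence $\mu_1\le C^2\mu_2$ on all Borel sets). Ergodic measures that are mutually absolutely continuous coincide, which gives uniqueness.

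For ergodicity we show $\mu([u]\cap\sigma^{-(m+k+j)}[v])\ge c\,\mu[u]\mu[v]$ for all cylinders $u\in B_m(X)$, $v\in B_n(X)$, summing over $j$ and connecting words. Using the Gibbs property, specification and superadditivity,
\[
\sum_{w\in B_{k+j}}\mu[uwv]\ge c\,e^{-(m+n+k+j)P}\sum_{w'\in B_j}f_m(x_u)f_{j}(x_{w'})f_n(x_v)\ge c'\mu[u]\mu[v],
\]
where the last step uses (\ref{use2}) to handle $\sum f_j\approx e^{jP}$. This mixing-type lower bound extends to Borel sets and forces ergodicity.

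For the variational principle, superadditive sequences satisfy $h_m+\lim\frac1n\int\log f_n\,dm\le P(\F)$ for every $m\in M(X,\sigma_X)$; we would cite or prove this by the standard argument using $\log f_n\le$ the $n$-partition sums, i.e. $\frac1n(H_m(\alpha^n)+\int\log f_n\,dm)\le\frac1n\log\sum_u\sup_{[u]}f_n$. Combining this with (\ref{vp}) for $\mu$ gives equality and shows that $\mu$ is an equilibrium state.

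The main obstacle is the comparison of $\sum_{B_{n+k}}f_{n+k}$ with $\sum_{B_n}f_n$ from above in the sufficiency direction of \ref{sp1}, since superadditivity only bounds $f$ from below. We would try to extract it from (\ref{sup1}) itself, summing it over $u\in B_1(X)$.
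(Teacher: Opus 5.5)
Your plan follows the paper's proof part by part: the specification/superadditivity lower estimate for $\sum_{v\in F_{n+k}(u)}f_{m+n+k}(x_{uv})$, the BSM argument for necessity, ergodicity from a lower mixing bound on cylinders, uniqueness from Gibbs plus ergodicity, and Barreira's entropy inequality for the variational principle. Your upper bound in (\ref{RB}) is slightly more direct than the paper's. The estimate $\sum_{w\in B_{l+k}(X)}f_{l+k}(x_w)\geq \frac{\min_X f_k}{e^{C}M}Z_l(\F)$ follows at once from superadditivity and right-extendability of words, with no detour through $Z_{k_1+k+k_2}(\F)\geq AZ_{k_1}(\F)Z_{k_2}(\F)$.

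\textbf{The gap.} The gap is the step you flag yourself, and your opening sentence misdescribes it: the lower bound in (\ref{RB}) is \emph{not} free.
\begin{itemize}
\item The specification estimate only gives $\gtrsim f_m(x_u)\sum_{v\in B_n(X)}f_n(x_v)$. You then need $\sum_{w\in B_{n+k}(X)}f_{n+k}(x_w)\lesssim\sum_{v\in B_n(X)}f_n(x_v)$, and superadditivity cannot supply an upper bound of this kind.
\item Concretely, take the shift of finite type over $\{1,2,3\}$ forbidding the word $12$; it has strong specification with $k=1$. Put $f_n(x)=\exp\bigl(\max\{(n-i)^2: 0\leq i<n,\ x_{i+1}=2\}\bigr)$, with $\max\emptyset=0$. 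This sequence is superadditive with $C=0$ and locally constant, yet for $u=1$ the ratio in (\ref{RB}) is at most $e(3/e^2)^n\to 0$.
\item So this is exactly where (\ref{sup1}) has to be used. Your $m=0$ idea is not admissible, since (\ref{sup1}) is assumed only for $m\in\N$.
\end{itemize}

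Your fallback does work, but it needs one more step you did not write. Summing (\ref{sup1}) over $u\in B_1(X)$ controls length $n+k+1$, not $n+k$:
\[
\sum_{w\in B_{n+k+1}(X)}f_{n+k+1}(x_w)\leq C_1Z_1(\F)\sum_{v\in B_n(X)}f_n(x_v).
\]
You must then come back down one symbol using superadditivity again. Extend each $w'\in B_{n+k}(X)$ to some $w'a\in B_{n+k+1}(X)$ and use $f_{n+k+1}\geq e^{-C}(\min_X f_1)f_{n+k}$ together with bounded variation. This gives $\sum_{w\in B_{n+k+1}(X)}f_{n+k+1}(x_w)\geq \frac{\min_X f_1}{e^{C}M}Z_{n+k}(\F)$. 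Hence
\[
Z_{n+k}(\F)\leq \frac{e^{C}MC_1Z_1(\F)}{\min_X f_1}\sum_{v\in B_n(X)}f_n(x_v),
\]
which closes the lower bound. This is the paper's key step specialised to $m=1$: the paper sums (\ref{sup1}) over $u\in B_m(X)$ and strips the final $k$ symbols via superadditivity to get $Z_{m+n}(\F)\leq \tilde C Z_m(\F)Z_n(\F)$.

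\textbf{Two smaller slips.}
\begin{itemize}
\item In the ergodicity estimate, a free word $w'\in B_j(X)$ needs a connector on each side. So the gap between $u$ and $v$ must be $j+2k$ (the paper's $t+2k$), not $j+k$; this only changes constants.
\item In the necessity half of (i), you use the upper bound of (\ref{RB}) at length $n+k$ and BSM for the pair $(n,k)$, for every $n\geq 1$. That requires both to hold without the restriction $n\geq K$. Cite the corollary following Theorem \ref{main} (which allows $K=1$) together with Lemma \ref{charBSM}, as the paper does.
\end{itemize}
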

\begin{rem}
In \cite[Lemma 2.6]{R}, it was shown that an invariant Gibbs measure for a superadditive sequence $\F$, if 
it exists, is an equilibrium state for $\F$. 
In the theory of matrix cocyles, Wu \cite{w1} showed the variational principle for superadditive 
sequences of continuous functions under a certain setting.
\end{rem}
\begin{proof}
We show \ref{sp1}. Recall that if $X$ is right balanced with respect to $\F$, then 
(\ref{RB}) holds for $K=1$. 
Hence similar arguments as in the proof of Theorem \ref{mishiro} \ref{sp1} show the result.
For the other implication, we first note that 
for any $u, v\in B(X)$, 
there exists $w\in B_{k}(X)$ such that $uwv\in B(X)$. 
We first show that there exists 
$\tilde{C}>0$ such that $Z_{n+m}(\F)\leq \tilde{C} Z_{n}(\F)Z_{m}(\F)$ for each $n,m\in\N$. Taking the sum 
over all $u\in B_m(X)$ in (\ref {sup1}), we obtain 
\begin{equation*}
\sum_{w\in B_{m+n+k}(X)}f_{m+n+k}(x_{w})\leq C_1\sum_{u\in B_m(X)} f_{m}(x_u)\sum_{v\in B_n(X)}f_{n}(x_v)
\leq C_1M^{2}Z_m(\F)Z_n(\F)
\end{equation*}
for each $x_{w}\in [w]$, $x_u\in [u]$ and $x_v\in [v]$. 
By the superadditivie property  in (\ref{aa2}), for $w\in B_{m+n+k}(X)$
\begin{equation}\label{sub1}
\begin{split}
f_{m+n+k}(x_{w})
&\geq e^{-C} f_{m+n}(x_{w})f_{k}(\sigma^{m+n}_Xx_{w})
\\
&\geq \frac{\min\{f_k(x): x\in X\}}{e^{C}M}\sup \{f_{m+n}(x): x\in [w_1\dots w_{m+n}] \}.
\end{split}
\end{equation}
Given $w_1\dots w_{m+n}\in B_{m+n}(X)$, there exists $\bar w\in B_k(X)$ such that $w_1\dots w_{m+n}\bar w\in B_{m+n+k}(X)$. Hence
summing over all allowable words of length $(m+n)$ in $X$ in (\ref{sub1}),
\begin{equation*}
\sum_{w\in B_{m+n+k}(X)}f_{m+n+k}(x_{w})
\geq  \frac{\min\{f_k(x): x\in X\} Z_{m+n}(\F)}{e^{C}M}.
\end{equation*}
Now we show the lower bound inequality in (\ref{RB}). 
For each $n,m\in\N$, fix $u\in B_m(X)$ and let $v\in F_{n+k}(u)$.
Given $\bar{v}\in B_n(X)$, there exists $w\bar{v}\in F_{n+k}(u)$ for some $w\in B_k(X)$. 
For $x_{uw\bar{v}}\in [uw\bar{v}]$
\begin{equation*} 
\begin{split}
f_{m+n+k}(x_{uw\bar{v}}) 
 &\geq e^{-2C}f_{m}(x_{uw\bar{v}})f_{k}(\sigma^{k}_Xx_{uw\bar{v}})f_{n}(\sigma^{m+k}_Xx_{uw\bar{v}})\\
 &\geq \frac{\min \{f_{k}(x): x\in X\}\sup\{f_m(x):x\in [u]\}\sup\{f_{n}(x): x\in [\bar{v}]\}}{e^{2C}M^2}.
  \end{split}
\end{equation*}
Hence 
 \begin{equation*} 
\begin{split}
\sum_{v\in F_{n+k}(u)}f_{m+n+k}(x_{uv}) &\geq 
\frac{\sum_{\bar{v}\in B_n(X), uw\bar{v}\in B_{m+n+k}(X)}f_{m+n+k}(x_{uw\bar{v}})}{M}\\
 &\geq \frac{\min \{f_{k}(x): x\in X\}\sup\{f_m(x):x\in [u]\} Z_n(\F)}{e^{2C}M^3}.\\
 \end{split}
\end{equation*}
Using $Z_{n+k}(\F)\leq \tilde{C} Z_{n}(\F)Z_{k}(\F)$, for each $n\in\N$
\begin{equation*} 
\begin{split}
 \frac{\sum_{v\in F_{n+k}(u)}f_{m+n+k}(x_{uv})}{f_{m}(x_{u}) \sum_{v\in B_{n+k}(X)}f_{n+k}(x_{v})}
 &\geq \frac{\sup\{f_m(x):x\in [u]\} \min \{f_{k}(x): x\in X\} Z_n(\F)}{f_{m}(x_{u})e^{2C}M^4Z_{n+k}(\F)}\\
 &\geq \frac{\min \{f_{k}(x): x\in X\}}{M^5\tilde{C}e^{2C}Z_k(\F)}
 \end{split}
\end{equation*}
 for each $x_{uv}\in [uv]$, $x_u\in [u]$ and $x_v\in [v]$. 
  Now we show the upper bound inequality in (\ref{RB}). It is easy to see that for each $k_1,k_2\in \N$, 
 $$Z_{k_1+k+k_2}(\F)\geq \frac{\min\{f_k(x): x\in X\} Z_{k_1}(\F) Z_{k_2}(\F)}{e^{2C}M^2}.$$
 Set $A:=\min\{f_k(x): x\in X\}/(e^{2C}M^2)$. Then for $k_1,k_2\in \N$, 
 \begin{equation*}
 Z_{k_1+k_2}(\F)\leq \tilde{C}Z_{k_1}(\F) Z_{k_2}(\F)\leq \frac{\tilde{C}}{A}Z_{k_1+k_2+k}(\F).
 \end{equation*}
 Let $A'=Z_1(\F)/Z_{k+1}(\F)$ and define $B:=\max\{A', {\tilde{C}}/{A}\}$. 
 Then for $n\geq 1$, 
 \begin{equation*} 
\begin{split}
 \frac{\sum_{v\in F_{n+k}(u)}f_{m+n+k}(x_{uv})}{f_{m}(x_{u}) \sum_{v\in B_{n+k}(X)}f_{n+k}(x_{v})}
 &\leq \frac{C_1f_m(y_u)\sum_{v\in B_{n}(X)}f_{n}(x_{v})}
 {f_{m}(x_{u})\sum_{v\in B_{n+k}(X)}f_{n+k}(x_{v})}\\
 &\leq C_1M^2\frac{Z_n(\F)}{Z_{n+k}(\F)}\leq C_1M^2B.
 \end{split}
\end{equation*}
Finally set $K=k+1$ in (\ref{RB}). 
 Similar arguments show \ref{sp2}. \ref{sp3} follows immediately. For \ref{sp4}, suppose $X$ is balanced with 
 respect to $\F$.  We prove that an invariant Gibbs measure $\mu$ for $\F$ is ergodic by showing that  
 there exists $A>0$ such that for each fixed $u\in B_{m}(X)$ and $v\in B_{n}(X)$, we have $\mu([u]\cap \sigma^{-l}_X([v]))
 \geq {A}\mu([u])
\mu([v])$ for all $l> m+2k$.  

 \textbf{[Claim]} 
There exists $D>0$ such that for fixed allowable words 
$u \in B_m(X), v \in B_n(X)$, $m,n,t\in\N$, we have
\begin{equation*}
\begin{split}
&\sum_{ua_1\dots a_{t+2k}v\in B_{m +n+t+2k}(X)} \sup \{f_{m+n+t+2k}(x): x \in [ ua_1\dots a_{t+2k}v]\}\\
& \geq D^2\sup\{f_m(x): x\in [u]\}\sup\{f_n(x): x\in [v]\}Z_{t}(\F).
\end{split}
\end{equation*}
We show the claim at the end of the proof. 
Let $M_1=\max\{0, P\}$ and $\bar C$ be a constant from the Gibbs property in (\ref{gibbsd}). 
Let $\tilde{C} $ be defined from
(\ref{key1}). For $l> m+2k$, set $t=l-m-2k$, using the claim above,
\begin{align*}
&\mu([u]\cap\sigma^{-l}_X([v]))
=\sum_{ua_1\dots a_{t+2k}v\in B_{n+l}(X)}\mu([ua_1\dots a_{t+2k}v])\\
&\geq  \frac{e^{-(m+n+t)P-2kM_1}}{\bar{C}M} 
\sum_{ua_1\dots a_{t+2k}v\in B_{n+l}(X)}
 \sup\{f_{m+n+t+2k}(x): x\in [ua_1\dots a_{t+2k}v]\}\\
&\geq  \frac{ D^2e^{-(m+n+t)P-2kM_1}}{\bar{C}M} 
Z_t(\F)\sup\{f_{m}(x): x\in [u]\}\sup\{f_{n}(x): x\in [v]\}\\
&\geq  \frac{D^2e^{-2kM_1}}{M\bar{C}^3\tilde {C}} \mu([u])\mu([v]).
\end{align*}

The Gibbs property with ergodicity implies that the uniqueness of
invariant Gibbs measures for $\F$. 
Now we show the claim. Let $m_{k}:=\min\{f_{k}(x): x\in X\}$. 
For a fixed $t\in \N$, fix $c\in B_{t}(X)$. Given $v$ and $c$,  there exists $w_1\in B_{k}(X)$ such that 
\begin{equation}\label{k0}
\sup \{f_{t+k +n}(x): x \in [cw_1v]\} \geq \frac{m_k\sup\{f_t(x): x\in [c]\}\sup\{f_n(x): x\in [v]\}}{M^2}.
\end{equation}
Define $D_1:=m_{k}/M^2$. For fixed $u$ and $cw_1v$ above, there  exists 
$w_2\in B_{k}(X)$ such that 
\begin{align}
&\sup \{f_{m+t+n+2k}(x): x \in [uw_2cw_1v]\} \label{k2}\\ 
& \geq D_1\sup\{f_m(x): x\in [u]\}\sup \{f_{t+k +n}(x): x \in [ cw_1v]\} \label{k3}\\
 & \label{k4}  \geq D_1^2\sup\{f_m(x): x\in [u]\}\sup\{f_t(x): x\in [c]\}\sup\{f_n(x): x\in [v]\}. 
\end{align}
Summing over all allowable words $c\in B_{t}(X)$, each of which satisfies (\ref{k0}) and (\ref{k2})-(\ref{k4}) with some $w_1, w_2$, 
we obtain the claim.
Finally we show \ref{sp5}. First note that  the unique invariant Gibbs measure is ergodic by the proof above and satisfies (\ref{vp}). Now we apply a proof found in \cite{b2}. 
For completeness, we give a sketch of the proof. First note that
for any $p_i\geq 0, \sum_{i=1}^{l}p_i=1, c_i\in \R$, 
$$\sum_{i=1}^{l}p_i(-\log p_i+c_i)\leq \log \sum_{i=1}^{l}e^{c_i}.$$
For $u\in B_n(X)$, $n\in \N$, set $C_{u}=\sup\{f_n(x): x\in [u]\}$. Then 
$Z_n(\F)=\sum_{u\in B_n(X)} C_{u}$.
Then for each $\mu\in M(X, \sigma_X)$
$$\sum_{u\in B_n(X)}\mu[u](\log C_u-\log\mu[u]-\log Z_n(\F))\leq 0.$$
 Dividing by $n$ and letting $n\to \infty$, we obtain
 $P(\F)\geq h_{\mu}(\sigma_X)+\lim_{n\to \infty}(1/{n})\int \log f_n d \mu$.
 Note that $\lim_{n\to \infty}({1}/{n})\int \log f_n d \mu$ exists because $\G:=\{\log (1/f_n)\}_{n\in \N}$ 
 is subadditive and $\log (1/f_1)^{+} \in L_1(\mu)$.
 Taking the supremum over all $\mu\in M(X, \sigma_X)$ and using the existence of an invariant Gibbs measure 
 satisfying (\ref{vp}), we  obtain the results.

 \end{proof}

\begin{coro} 
If $X$ is balanced with respect to a sequence of continuous functions $\F$ on a subshift $X$ satisfying
 \ref{a1} in which  $B_i(X)$ is replaced by $B_k(X)$, then there exists a unique invariant Gibbs measure
for $\F$ and it is an ergodic measure.  
\end{coro}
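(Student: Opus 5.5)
Existence comes from Theorem \ref{main}\ref{m2}: since $X$ is balanced with respect to $\F$, there is an invariant Gibbs measure $\mu$ for $\F$. As observed after Lemma \ref{charBSM}, the balanced property also gives bounded variation of $\F$ with some constant $M$, and $P=P(\F)$ by Remark \ref{aboutP}. What remains is to show that $\mu$ is ergodic and unique. Note that no sub- or superadditivity is assumed; the only gluing hypothesis is \ref{a1} with connecting words of exact length $k$. My plan is therefore to repeat the ergodicity argument in the proof of Theorem \ref{mishiroU}\ref{sp4}, replacing every use of superadditivity combined with strong specification by the hypothesis \ref{a1}.

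\emph{Step 1 (the Claim).} I would show that there exists $D>0$ such that for all $u\in B_m(X)$, $v\in B_n(X)$ and $t\in\N$,
\[
\sum_{ua_1\dots a_{t+2k}v\in B(X)}\sup\{f_{m+n+t+2k}(x): x\in[ua_1\dots a_{t+2k}v]\}\geq D^2\sup_{[u]}f_m\,\sup_{[v]}f_n\,Z_t(\F).
\]
For each $c\in B_t(X)$, apply \ref{a1} (in the form (\ref{original}), with $|w_1|=k$) to the pair $c,v$ to obtain $w_1\in B_k(X)$ such that $\sup_{[cw_1v]}f_{t+k+n}\geq D\sup_{[c]}f_t\sup_{[v]}f_n$. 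Then apply \ref{a1} to the pair $u,\,cw_1v$ to obtain $w_2\in B_k(X)$ with $\sup_{[uw_2cw_1v]}f\geq D\sup_{[u]}f_m\sup_{[cw_1v]}f$. Combining the two gives the bound for each $c$. Distinct $c$ give distinct words $uw_2cw_1v$, because $c$ sits in the fixed positions $m+k+1,\dots,m+k+t$. Summing over $c$ therefore yields the Claim. This step is easier than in Theorem \ref{mishiroU}: \ref{a1} supplies the multiplicative gluing directly, with no $m_k$ factor.

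\emph{Step 2 (mixing lower bound).} For $l=m+2k+t$, write $\mu([u]\cap\sigma_X^{-l}[v])$ as a sum of $\mu$ over the cylinders $[ua_1\dots a_{t+2k}v]$. Using the Gibbs lower bound together with bounded variation (to pass from an arbitrary point to the supremum at the cost of a factor $M$), and then the Claim, I get a lower bound of the form $e^{-(m+n+t+2k)P}D^2\sup f_m\sup f_n Z_t(\F)$ up to constants. Lemma \ref{charBSM} applies because $X$ is BSM($\F$) by Lemma \ref{implication}; it gives $Z_t(\F)\geq e^{tP}/\tilde C$. The Gibbs upper bound then converts $\sup f_m$ and $\sup f_n$ into $\mu[u]$ and $\mu[v]$. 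The factor $e^{-2kP}$ is a fixed constant, which I would bound by $e^{-2k\max\{0,P\}}$ as in the paper. The result is $\mu([u]\cap\sigma_X^{-l}[v])\geq A\mu[u]\mu[v]$ for all $l>m+2k$, with $A>0$ independent of $u,v,l$.

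\emph{Step 3 (ergodicity and uniqueness).} A standard argument turns the inequality of Step 2 into ergodicity. Let $E$ be invariant with $0<\mu(E)<1$. Approximate $E$ and $E^c$ by finite unions of cylinders, and apply the inequality (extended to finite unions of cylinders of a fixed length) to get $\mu(E\cap E^c)\ge A\mu(E)\mu(E^c)-\varepsilon$ for every $\varepsilon>0$, which is a contradiction. For uniqueness, let $\mu'$ be another invariant Gibbs measure for $\F$. Both measures are Gibbs with the same $P=P(\F)$, so they are comparable on cylinders, hence $\mu'\ll\mu$. An invariant measure absolutely continuous with respect to an ergodic invariant measure must equal it, so $\mu'=\mu$.

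I expect the main obstacle to be bookkeeping rather than ideas. The delicate points are that \ref{a1} is stated for suprema over cylinders, which requires bounded variation in order to use Gibbs bounds at arbitrary points, and the injectivity of $c\mapsto uw_2cw_1v$ in Step 1.
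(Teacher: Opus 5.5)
Your proposal is correct and follows the paper's route. Existence comes from Theorem \ref{main}\ref{m2}. Ergodicity comes from the argument of Theorem \ref{mishiroU}\ref{sp4}, with the Claim obtained directly from \ref{a1} (connecting words of length exactly $k$) instead of from superadditivity plus strong specification. Uniqueness then follows from ergodicity together with the comparability of Gibbs measures on cylinders.

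You also supply standard details the paper leaves implicit: the approximation by unions of cylinders and the absolute-continuity step. The only slip is a cosmetic misreference: the remark that one-sided balance implies bounded variation appears before Proposition \ref{submulti}, not after Lemma \ref{charBSM}.
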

\begin{proof}
By the proof of Theorem \ref{mishiroU} \ref{sp4},
 if $\F$ satisfies \ref{a1} in which  $w\in B_k(X)$ for each $w$, 
  any invariant Gibbs measure for $\F$ is ergodic. 
\end{proof}

\begin{ex}\label{ex1}
Let $p_1:=(1,2,3,1,2,3..)$, $p_2:=(2,3,1,2,3..), p_3:=(3,1,2,3,1,2,3..) \in \Sigma^{+}_{3}$ be points of period 3.
Let $p_4:=(2,2,2\dots)\in \Sigma^{+}_{3}$ be a point of period 1. 
%
Let $X=\{p_1,p_2,p_3,p_4\}$. Then $(X, \sigma_X)$ is not irreducible. Define $\bar{\mu}\in M(X,\sigma_X)$ by $\bar{\mu}(\{p_i\})=1/4$ for $1\leq i\leq 4$. Let $\phi_n(x):=\bar{\mu}[x_1\dots x_n]$ for each $x \in X$, $n\in \N$  and define $\Phi:=\{\log \phi_n\}_{n \in \N}$. Then $\Phi$ is almost additive, 
not quasi-multiplicative, and $\bar{\mu}$ is  Gibbs  for $\Phi$. In fact, $X$ is balanced respect to $\Phi$ because
\begin{equation}\label{m}
 \frac{\sum_{v\in F_n(u)}\phi_{m+n}(x_{uv})}{\phi_m(x_u)\sum_{w\in B_{n}(X)}\phi_{n}(x_{w})}=
\frac{\sum_{v\in P_n(u)}\phi_{m+n}(x_{vu})}{\phi_m(x_u)\sum_{w\in B_{n}(X)}\phi_{n}(x_{w})}=\frac{\bar {\mu}[u]}{\bar{\mu}[u]}=1
\end{equation}
for each choice of 
$x_{u}\in[u]$, $x_{uv}\in [uv]$, $x_{vu}\in [vu]$ $x_{w}\in [w]$ 
Note that $\bar\mu$ is not a unique invariant Gibbs measure for $\Phi$ and is not ergodic.

\end{ex}
\begin{ex}\label{ex2}
Let $(X, \sigma_X)$ be a subshift. Then  $X$ is balanced with respect to $\Phi:=\{\log \mu[x_1\dots x_n]\}_{n\in \N}$ for any $\mu\in M(X, \sigma_X)$.   If  $\mu$ is a Borel probability measure on $X$, 
$X$ is right balanced with respect to $\Phi$.

\end{ex}
\begin{ex}\label{ex3} 
In the study of matrix cocycle potentials, 
Rush \cite[Theorem 1.1]{R} showed the existence and uniqueness of an invariant Gibbs measure  for a superadditive sequence 
$\Phi_t=\{\phi_{t,n}\}_{n\in\N}, t<0$, 
on a one-sided topologically 
mixing shift of finite type $X$ and its ergodicity when $t$ is sufficiently close to $0$,  
where 
$\phi_{t,n}:=S_n\psi +t \log f_n$, where $\psi$ is H{\"o}lder continuous  and  $\{\log f_n\}_{n\in\N}$ is 
a quasi-multiplicative sequence associated with a cocycle under some conditions 
(See \cite{R} for details). 
Theorem \ref{main} (Theorem \ref{mishiroU}) implies that $X$ is balanced with respect to 
such sequence $\Phi_t, t<0, $ where $t$ is sufficiently close to $0$ (See also  \cite{MV, MQ} for some examples of Gibbs measures for superadditve sequences).
\end{ex}
\begin{ex}
We give an example of Corollary \ref{onesidedGibbs}. Take a sequence $\F$ on a subshift $X$ defined in Theorem \ref{past1}. 
In general $\F$ is neither subadditive nor superadditive. It is easy to see that  $\{\log Z_n(\F)\}_{n\in\N}$ is almost additive and $\F$ has bounded variation. Hence $X$ is BSM($\F$). By the proof of Theorem \ref{general}, $X$ satisfies the lower bound inequalities in (\ref{RB}) and (\ref{LB}). Hence there exists an invariant measure 
satisfying the lower bound property of (\ref{gibbsd}) for $\F$.
\end{ex}

\section{Applications}\label{apli}
In this section, we apply Theorem \ref{main} to study one-block factor maps $\pi: X \to Y $ between subshifts. 
Given a unique equilibrium 
state $\mu$ for a subadditive sequence $\G$ on $Y$ which is also Gibbs for $\G$, we  consider
preimage measures $\nu$ on $X$ such that $\nu$ is a measure of maximal relative entropy over $\mu$.  
We study a necessary and sufficient condition for such a measure to be an invariant Gibbs measure.

Let  $(X, \sigma_X)$ and $(Y, \sigma_Y)$ be one-sided subshifts. 
A map $\pi:X\rightarrow Y$ is a  factor map if it is continuous, surjective and satisfies 
$\pi \circ \sigma_{X} = \sigma_Y\circ \pi$. A one-block code  is a map $\pi:X\rightarrow Y$ for which there exists a function, denoted again by $\pi$, $\pi:B_1(X)\rightarrow B_1(Y)$ such that 
$(\pi(x))_i=\pi(x_i)$ for all $i\in\N$.
Given a one-block factor map $\pi: X\rightarrow Y$ between subshifts and 
an invariant measure $\mu$ on $X$, define the image $\pi\mu\in M(Y, \sigma_Y)$ by 
$\pi\mu(B):= \mu(\pi^{-1}B)$ { for a Borel set  $B$ of $Y$}. 
If $X$ has the strong specication property, for $y=(y_1, \dots, y_n, \dots)\in Y$,
define  $\vert \pi^{-1}[y_1\dots y_n]\vert $ to be the cardinality of the allowable words of length $n$ in $X$ such that $\pi(x_1\dots x_n)=y_1\dots y_n$. For $n\in\N$ and  $y\in Y$, define
$\phi_n(y):=\vert \pi^{-1}[y_1\dots y_n]\vert$. Then $\{\log \phi_n\}_{n\in\N}$ is subadditive with $C=0$ in (\ref{aa1}) and 
satisfies \ref{a1} with bounded variation \cite{Fe, Y2}. The next theorem extends  \cite[Corollary 3.4]{w} to sequences of continuous functions.

\begin{thm} \label{past1} (see \cite[Lemma 3.2]{ly}, \cite[Theorem  3.8]{Y2})
Let $\pi: X \to Y$ be a one-block factor map between subshifts where 
$X$ has the strong specification property. Suppose that there exists an invariant Gibbs measure for 
a subadditive sequence $\G=\{\log g_n\}_{n\in \N}$ on $Y$ . For each $n\in\N$, let ${f}_n:=\frac{g_n\circ\pi}{\phi_n\circ \pi}$ 
and $\F=\{\log f_n\}_{n\in\N}$. Then
\begin{equation*}
\begin{split}
P(\F)=&\sup_{m\in M(X, \sigma_X)}\{h_{m}(\sigma_X)+\lim_{n\to \infty}\frac{1}{n}\int \log \big(\frac{g_n\circ\pi}{\phi_n\circ \pi}\big)dm\}\\
&=\sup_{m\in M(Y, \sigma_Y)}\{h_{m}(\sigma_Y)+\lim_{n\to \infty}\frac{1}{n}\int \log g_n dm\}=P(\G).
\end{split}
\end{equation*}
There exists an equilibrium state $\nu$ for ${\F}$. A measure $\nu$ is an equilibrium state for ${\F}$ if and only if
 $\nu$ is a measure of maximal relative entropy over $\pi\nu$, i.e., 
$h_{\nu}(\sigma_X)=\max\{h_{\bar{\nu}}(\sigma_X): \bar \nu\in M(X, \sigma_X), \pi \bar{\nu}=\pi\nu\}$, and $\pi\nu$ is 
an equilibrium state for ${\G}$.

\begin{coro}\label{ucase} In Theorem \ref{past1}, suppose an invariant Gibbs measure $\mu$ for $\G$ is a unique equilibrium state 
for $\G$. Then  a measure $\nu$ on $X$  is an equilibrium state  for ${\F}$ if and only if $\nu$ is a measure  of maximal relative entropy over $\mu$.
\end{coro} 



\end{thm}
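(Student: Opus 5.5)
The statement to prove is Corollary \ref{ucase}. It should follow directly from Theorem \ref{past1}, together with the uniqueness hypothesis on the equilibrium state for $\G$. The one point needing care is that "measure of maximal relative entropy over $\mu$" must first be tied to the condition $\pi\nu=\mu$, and then translated into the characterization in Theorem \ref{past1}.

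For the forward direction, let $\nu$ be an equilibrium state for $\F$. By Theorem \ref{past1}, $\pi\nu$ is an equilibrium state for $\G$. Since $\mu$ is the unique equilibrium state for $\G$, we get $\pi\nu=\mu$. Theorem \ref{past1} also says that $h_\nu(\sigma_X)=\max\{h_{\bar\nu}(\sigma_X): \bar\nu\in M(X,\sigma_X),\ \pi\bar\nu=\pi\nu\}$. Substituting $\pi\nu=\mu$, this is exactly the statement that $\nu$ is a measure of maximal relative entropy over $\mu$.

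For the converse, suppose $\nu\in M(X,\sigma_X)$ satisfies $\pi\nu=\mu$ and $h_\nu(\sigma_X)=\max\{h_{\bar\nu}(\sigma_X):\pi\bar\nu=\mu\}$. Two facts need to be recorded:
\begin{itemize}
\item The maximum exists. The fiber $\{\bar\nu:\pi\bar\nu=\mu\}$ is nonempty, since $\pi$ is surjective and invariant lifts exist (weak$^*$ limits of averages of lifts). It is compact, and $\bar\nu\mapsto h_{\bar\nu}(\sigma_X)$ is upper semicontinuous on a subshift.
\item $\pi\nu=\mu$ is an equilibrium state for $\G$. Indeed, $\mu$ is by hypothesis the unique equilibrium state for $\G$.
\end{itemize}
Hence $\nu$ has maximal relative entropy over $\pi\nu$, and $\pi\nu$ is an equilibrium state for $\G$. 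The ``if'' part of Theorem \ref{past1} then gives that $\nu$ is an equilibrium state for $\F$.

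There is essentially no obstacle here. The only subtlety is the reading of ``measure of maximal relative entropy over $\mu$'': it must be understood as including $\pi\nu=\mu$, so that it matches the formulation in Theorem \ref{past1}. Once that convention is fixed, both directions are immediate.
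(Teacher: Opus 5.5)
Your deduction of Corollary \ref{ucase} is correct. Both directions follow immediately from the characterization in Theorem \ref{past1} together with uniqueness of the equilibrium state for $\G$, and this is all the paper intends, since it states the corollary without proof. The problem is scope. The statement you were given is Theorem \ref{past1} itself, with the corollary nested inside it, and you have used the theorem as a black box. The paper does not prove it either; it imports it from \cite[Lemma 3.2]{ly} and \cite[Theorem 3.8]{Y2}. So your proposal is only as complete as that citation. On its own it proves none of the following: the pressure identities, the existence of an equilibrium state for $\F$, or the characterization of equilibrium states.

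If you want to supply that part, the argument runs as follows.

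First, $P(\F)=P(\G)$ is elementary. The value $\phi_n(\pi x)$ depends only on $\pi(x_1\dots x_n)$, and $\G$ has bounded variation (constant $M$) because it has a Gibbs measure. Grouping the words $u\in B_n(X)$ according to $w=\pi(u)\in B_n(Y)$ therefore gives $M^{-1}Z_n(\G)\le Z_n(\F)\le Z_n(\G)$.

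Second, for $m\in M(X,\sigma_X)$ one has $\lim\frac1n\int\log f_n\,dm=\lim\frac1n\int\log g_n\,d\pi m-\lim\frac1n\int\log\phi_n\,d\pi m$, both limits existing by subadditivity. The key ingredient you are missing is the relative entropy identity
\[ \max\{h_{\bar\nu}(\sigma_X):\bar\nu\in M(X,\sigma_X),\ \pi\bar\nu=m'\}=h_{m'}(\sigma_Y)+\lim_{n\to\infty}\frac1n\int\log\phi_n\,dm', \qquad m'\in M(Y,\sigma_Y). \]
The inequality $\le$ is the conditional entropy bound $H_{\bar\nu}(\alpha_n)\le H_{\pi\bar\nu}(\beta_n)+\int\log\phi_n\,d\pi\bar\nu$, where $\alpha_n$ and $\beta_n$ are the partitions of $X$ and $Y$ into $n$-cylinders. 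Attainment is the substantive input, supplied by \cite{Y2} (see also \cite{Fe}).

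Granting this identity, you get
\[ h_m(\sigma_X)+\lim\frac1n\int\log f_n\,dm\le h_{\pi m}(\sigma_Y)+\lim\frac1n\int\log g_n\,d\pi m\le P(\G). \]
Equality holds in the first step iff $m$ has maximal relative entropy over $\pi m$, and in the second iff $\pi m$ is an equilibrium state for $\G$. Two further ingredients finish the proof:
\begin{enumerate}
\item The subadditive variational principle \cite{CFH} on $Y$, combined with lifting each measure on $Y$ to a maximal-relative-entropy lift, gives all the stated equalities and the characterization of equilibrium states.
\item Existence follows by lifting the invariant Gibbs measure for $\G$, which is an equilibrium state by Theorem \ref{main}\ref{m2}. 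Your remark about compactness of the fiber and upper semicontinuity of entropy is superfluous for the corollary, but it is exactly what this existence step needs.
\end{enumerate}
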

In general, $\F$ in  Theorem \ref{past1} is neither subadditive nor superadditive.  If we set $g_n={\phi_n}^{1/(1+\alpha)}$,
$\alpha>0$, there is a unique invariant Gibbs measure $\G$ which is also a unique equilibrium state for $\G$ 
and  this sequence appears in \cite{Fe, Y2}. By \cite{Fe} $\nu$ is  unique and ergodic. 
The Gibbs property in general is not known.   
The next theorem gives a necessary and sufficient condition for such a measure  $\nu$ 
to be Gibbs for a sequence of continuous functions. 

\begin{thm} \label{general}
In Theorem \ref{past1} and Corollary \ref{ucase},
an equilibrium state $\nu$ for $\F$ is an invariant Gibbs measure for $\F$  if and only if
there exist $C \geq 1$ and $K\in \N$ such that for each $m\in \N, n\geq K,
u\in B_m(X),$ the upper bound inequalities in (\ref{RB}) and (\ref{LB}) hold.
\end{thm}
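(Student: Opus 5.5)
The plan is to prove the two directions separately. The forward direction is immediate, and the reverse direction rests on showing that the lower bound inequalities in (\ref{RB}) and (\ref{LB}) hold automatically for this particular $\F$.

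\emph{Forward direction.} Suppose the equilibrium state $\nu$ is an invariant Gibbs measure for $\F$. Then Theorem \ref{main}\ref{m2} shows that $X$ is balanced with respect to $\F$, so in particular the upper bound inequalities in (\ref{RB}) and (\ref{LB}) hold.

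\emph{Reverse direction, step 1: the lower bounds are automatic.} Write $y=\pi(u)$. Since $\G$ has an invariant Gibbs measure on $Y$, Theorem \ref{main}\ref{m2} shows that $Y$ is balanced with respect to $\G$, and $\G$ has bounded variation. Grouping the words $w\in B_n(X)$ according to their image gives the identity
\[
\sum_{w\in B_n(X)}f_n(x_w)=\sum_{y'\in B_n(Y)}g_n(x_{y'})
\]
for suitable choices of points, because each $y'$ has exactly $\phi_n(y')$ preimages. For the numerator of (\ref{RB}), I would group $v\in F_n(u)$ according to $y'=\pi(v)$ and then use two facts:
\begin{enumerate}[label=(\alph*)]
\item $\phi_{m+n}(yy')\le \phi_m(y)\phi_n(y')$, by subadditivity of $\{\log\phi_n\}$ with constant $0$;
\item strong specification of $X$ (specification number $k$), which should give $\#\{v\in F_n(u):\pi(v)=y'\}\ge c\,\phi_n(y')$ for every $y'\in F_n(y)$. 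To see this, chop the first $k$ symbols of a preimage of $y'$ and glue it to $u$ through a connecting word of length $k$. The connecting word may change the image in those $k$ places, so I will absorb the discrepancy using $\phi_n\le\phi_k\phi_{n-k}$, $\phi_k\le S^k$, and the bounded variation of $\G$.
\end{enumerate}
Combining (a) and (b), the numerator of (\ref{RB}) is bounded below by
\[
c\,\frac{1}{\phi_m(y)}\sum_{y'\in F_n(y)}g_{m+n}(x_{yy'}).
\]
By right balancedness of $Y$ with respect to $\G$, this in turn is bounded below by
\[
c'\,\frac{g_m(x_y)}{\phi_m(y)}\sum_{y'}g_n(x_{y'})=c'\,f_m(x_u)\sum_{w}f_n(x_w).
\]
The left-hand inequality of (\ref{LB}) follows symmetrically, using left balancedness of $Y$.

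\emph{Reverse direction, step 2: a Gibbs measure exists.} Together with the hypothesised upper bounds, step 1 shows that $X$ is balanced with respect to $\F$. Theorem \ref{main}\ref{m2} then produces an invariant Gibbs measure $\mu'$ for $\F$ with $P=P(\F)$, and (\ref{vp}) shows that $\mu'$ is an equilibrium state for $\F$.

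\emph{Reverse direction, step 3: $\nu=\mu'$.} Since the ergodic components of an equilibrium state are again equilibrium states, it suffices to show that every ergodic equilibrium state equals $\mu'$. I would run Bowen's argument. Suppose $\nu$ is an ergodic equilibrium state with $\nu\perp\mu'$. Take a set $E$ with $\nu(E)=1$ and $\mu'(E)=0$, and approximate $E$ by unions $U_n$ of $n$-cylinders. Split the sum
\[
\sum_{u\in B_n(X)}\nu[u]\bigl(\log\sup_{[u]}f_n-\log\nu[u]\bigr)
\]
over $U_n$ and its complement, and apply the elementary inequality $\sum p_i(-\log p_i+c_i)\le\log\sum e^{c_i}$ on each piece. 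The upper Gibbs bound for $\mu'$ gives $\sum_{u\subset U_n}\sup_{[u]}f_n\le C e^{nP}\mu'(U_n)$. Since $\mu'(U_n)\to0$, this yields
\[
n\bigl(h_\nu+\lim\tfrac1n\int\log f_n\,d\nu-P(\F)\bigr)\to-\infty,
\]
contradicting the fact that $\nu$ is an equilibrium state. Hence $\nu=\mu'$ is Gibbs.

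The step I expect to be the main obstacle is the counting estimate (b). Gluing along a length-$k$ connector alters the image word, so the comparison between preimage counts over $yy'$ and products $\phi_m\phi_n$ must be arranged carefully. The bounded variation of $\G$ and its balancedness must also be invoked at shifted lengths ($n-k$ versus $n$), with all resulting constants shown to be uniform in $m$, $n$ and $u$.
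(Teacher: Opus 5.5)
Your outline matches the paper's. The paper also reduces the reverse direction to the claim that the lower bounds in (\ref{RB}) and (\ref{LB}) hold automatically, and then applies Theorem~\ref{main}\ref{m2}. The genuine gap is that this claim, your Step~1, is false, so no repair of (b) can succeed. You noticed that a connector changes the image, but the loss is in the $\G$-weights, not in the counts. From the fixed $u$ you reach only the images $\pi(w)$ of $X$-connectors, and $g_{m+n}(y\,\pi(w)\,z)$ can be exponentially smaller than $g_{m+n}(y\tilde w z)$ for other $\tilde w$. Three facts show the tools you cite cannot absorb this:
bounded variation of $\G$ only controls coordinates beyond position $m+n$;
subadditivity only gives upper bounds;
right balancedness of $Y$ controls the sum over \emph{all} continuations of $y$, not over those reachable from $u$.

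Here is a counterexample. Let $X$ be the one-step SFT on $\{a,b,c,d\}$ in which $a,b,c$ may be followed by any symbol and $d$ only by $c$. It has strong specification with $k=2$. Put $\pi(a)=\pi(d)=0$, $\pi(b)=1$, $\pi(c)=2$. Then $Y=\{0,1,2\}^{\N}$, and $\log_2\phi_n(y)$ is the number of $i<n$ with $y_iy_{i+1}=02$, plus one if $y_n=0$. Hence $\phi_{m+n}(yy')\geq\frac12\phi_m(y)\phi_n(y')$. For $\G$, take $g_n=\phi'_n$, the fibre count of a second one-block factor $\pi':X'\to Y$. Here $X'$ has lanes $P=\{0_P,0'_P,1_P\}$ and $Q=\{0_Q,1_Q,2_Q\}$, each symbol is sent to its numeral, all transitions inside a lane are allowed, and lane changes are allowed only from $1_P$ into $Q$ and from $1_Q$ into $P$. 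Since $X'$ has strong specification, $\G$ satisfies \ref{a0} and \ref{a1} with bounded variation. By \cite{Fe} its invariant Gibbs measure is its unique equilibrium state, so all hypotheses of Theorem~\ref{past1} and Corollary~\ref{ucase} hold.

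Now $f_{m+n}(x)\leq 2f_m(x)f_n(\sigma_X^m x)$, so the upper bounds in (\ref{RB}) and (\ref{LB}) hold with $C=2$, $K=1$. But take $u=a^{m-1}d$: every $v\in F_n(u)$ begins with $c$. In $X'$ the only lift of $0^m$ in front of a $2$ is $0_Q^m$, whereas $\phi'_m(0^m)=2^m+1$. Hence $f_{m+n}(x_{uv})\leq\frac12 f_n(x_v)$ and $f_m(x_u)\geq 2^{m-1}$, so the ratio in (\ref{RB}) is at most $2^{-m}$ for every $n$. By Theorem~\ref{main}\ref{m1}, $\F$ has no Gibbs measure at all, so the `if' direction of the statement itself fails. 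The paper's proof breaks at the same point. It extracts lower bounds on $W_{N_0}$ and $\tilde W_{S_0}$ from \emph{upper} bounds on $\sum_iW_i$ and $\sum_i\tilde W_i$, and never relates the $X$-connector $w_{N_0}$ to the heavy $Y$-connector $\tilde w_{S_0}$.

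Some of this survives. If $\F$ is superadditive, the lower bounds (and with them the equivalence) hold: the strong-specification argument in the proof of Theorem~\ref{mishiroU}\ref{sp1} applies, since here $Z_{n+k}(\F)\leq c\,Z_n(\F)$ by subadditivity of $\G$. This covers the motivating case $g_n=\phi_n^{1/(1+\alpha)}$, where $f_n=\phi_n^{-\alpha/(1+\alpha)}\circ\pi$. The lower bounds also hold if $g_{m+n}(y\tilde wz)\geq c\,g_m(y)g_{n-k}(z)$ for every admissible connector $\tilde w$, for instance when $\G$ is almost additive. In that case your grouping argument goes through after summing over the $X$-lifts of each $z\in B_{n-k}(Y)$.

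Your Step~3 addresses something the paper passes over, namely that the \emph{given} $\nu$, not merely some equilibrium state, is Gibbs. As written it is incomplete, for two reasons. First, passing from $\nu\neq\mu'$ to $\nu\perp\mu'$ for ergodic $\nu$ requires $\mu'$ to be ergodic, which is not established (cf.\ Example~\ref{ex1}). Second, Bowen's estimate needs $\int\log f_n\,d\nu\geq n\lim_k\frac1k\int\log f_k\,d\nu-O(1)$. For the superadditive part $-\log\phi_n\circ\pi$ of $\log f_n$, only the reverse inequality is available.
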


\begin{proof}
It is enough to show that $\F=\{\log f_n\}_{n\in\N}$ satisfies the lower bound inequalities in (\ref{RB}) and (\ref{LB}). 
Since $\G$ has a Gibbs measure, it has bounded variation and there exists $M\geq 1$ such that
$ g_n(x)/g_n(y)\leq M$ for all $x, y\in [u], u\in B_n(Y), n\in \N$.
Since $X$ has the specification property, let $k$ be a number for specification. Suppose that $X$ is a subshift over $l$ symbols, $l\geq 2$. 
 Let $u\in B_m(X)$ be fixed and let  $n>k$.  
 Since for any $v\in B_{n-k}(X)$ there exists $w\in B_k(X)$ such that $uwv\in B_{m+n}(X)$, 
 let $W$ be the set consisting of all possible such $w\in B_k(X)$ and suppose 
$W=\{w_1, w_2, \dots, w_N\}$ for some $N\in\N$.
 First note that 
\begin{equation}
M\sum_{v\in F_n(u)} f_{m+n}(x_{uv})
\geq \sum_{ v\in B_{n-k}(X), uwv\in B_n(X)}\frac{g_{m+k+(n-k)}(\pi(x_{uwv}))}{\vert \pi^{-1}[\pi(uwv)]\vert} \\
\end{equation}
for each $x_{uv}\in [uv]$ and  $x_{uwv}\in [uwv]$.
Define for the fixed $u\in B_m(X)$ and $w_i\in W$, 
$$W_i=\sum_{uw_iv\in B_n(X), }\frac{g_{m+k+(n-k)}(\pi(x_{uw_iv}))}{\vert \pi^{-1}[\pi(uw_iv)]\vert}$$ for $i=1,\dots, N$, 
by choosing a point  $x_{uw_iv}$ from each cylinder set $[uw_iv]$. Since $g_n>0$ for each $n$, we obtain
\begin{equation*}
\begin{split}
\sum_{i=1}^{N}W_i
\leq M\sum_{\bar w\in B_{m+n}(X)}\frac{g_{m+n}(\pi(x_{\bar w}))}{\vert \pi^{-1}[\pi(\bar w)]\vert}\leq 
M\sum_{w\in B_{m+n}(Y)}\sup\{g_{m+n}(y_w): y_w\in [w]\}
\end{split}
\end{equation*}
for any choice of $x_{\bar{w}}\in [\bar w]$. 
Hence there exists $1\leq N_0\leq N$ such that 
 $$W_{N_0}\geq 
 \frac{M}{N}\sum_{w\in B_{m+n}(Y)}\sup\{g_{m+n}(y_w): y_w\in [w]\}.
 $$ 
 Since $X$ has the strong specification property, $Y$ also  has the property. Hence
given  $\tilde{v}\in B_{n-k}(Y)$ there exists $\tilde {w}\in B_{k}(Y)$ such that $\pi(u)\tilde{w}\tilde{v}\in B_{m+n}(Y)$. 
Let $\tilde{W}$ be the set consisting of all possible such $\tilde{w}$ and let
$\tilde{W}=\{\tilde{w}_1, \tilde{w}_2, \dots, \tilde{w}_S\}$ for some $S\in\N$. 
Define for the fixed $\pi(u)$ and $\tilde{w}_i \in \tilde{W}$, $$\tilde{W_i}=\sum_{\pi(u)\tilde{w}_i\tilde{v}\in B_{m+n}(Y), }g_{m+n}(y_{\pi(u)\tilde{w}_i\tilde{v}})$$
 for $i=1,\dots, S$, 
 by choosing a point $y_{\pi(u)\tilde{w}_i\tilde{v}}$ from each cylinder set $[\pi(u)\tilde{w}_i\tilde{v}]$. Then 
$$\sum_{i=1}^{S}\tilde{W}_i\leq M \sum_{\pi(u) v\in B_{n+m}(Y)}g_{m+n}(y_{\pi(u) v})\leq M^3 e^{C}g_m(y_{\pi(u)})
\sum_{w\in B_{n}(Y)}g_{n}(y_{w}) $$
for each $y_{\pi (u)v} \in [\pi (u)v], y_{\pi (u)} \in [\pi (u)], y_w\in [w]$, 
where $C$ is defined from the subadditive property (\ref{aa1}). 
Hence there exists $1\leq S_0\leq S$ such that

 \begin{equation}\label{eq2G}
 \tilde{W}_{S_0}\geq \frac{Me^C}{S}\sup\{g_m(y_{\pi(u)}): y_{\pi(u)}\in [\pi(u)]\}
\sum_{w\in B_{n}(Y)}\sup\{g_{n}(y_{w}):y_w\in [w]\} 
  \end{equation}
  for each $y_w\in [w]$. 
 Hence by  (\ref{eq2G})  
 \begin{equation*}
 \begin{split}
 &M\sum_{v\in F_n(u)}f_{m+n}(x_{uv})\geq W_{N_0}
\geq \frac{M}{N}\tilde W_{S_0} 
\\
&\geq \frac{M^2e^Cf_m(x_u)}{NS} \sum_{w\in B_{n}(X)}f_n(x_w)
 \geq \frac{f_m(x_u)}{l^{2k}} \sum_{w\in B_{n}(X)}f_n(x_w)
 \end{split} 
\end{equation*}
for each $x_u\in [u], x_{uv}\in [uv], x_w\in [w]$, for $n>k$.
Similarly, we can show the lower bound inequality of (\ref{LB}). 
\end{proof}
\begin{ex}
In Corollary \ref{ucase}, let $X$ be a topologically mixing shift of finite type and 
$g_n=({\phi_n})^{1/(1+\alpha)}$, $\alpha>0$. 
Then $\nu$ is invariant Gibbs for  $\F$ if and only 
if the upper bounds from equations (\ref{RB}) and  (\ref{LB}) hold, by Theorem \ref{general}.  
Equivalently, $\nu$ is Gibbs for  $\F$ if and only if (\ref {sup1}) and (\ref {sup2}) in Theorem \ref{mishiroU} hold.
\end{ex}
Now we study factors of invariant  Gibbs measures. The result generalizes \cite[Theorem 3.1]{yy} to arbitrary sequences of continuous functions on arbitrary subshifts.
\begin{prop}\label{yy}
Let $\pi: X \to Y$ be a one-block factor map between subshifts. If $X$ is balanced with respect to a sequence of continuous functions $\F=\{\log f_n\}_{n\in\N}$ on $X$,
then $Y$ is balanced with respect to another sequence of continuous functions $\G=\{\log g_n\}_{n\in\N}$.
If $\mu$ is an invariant Gibbs measure for $\F$, then
$$P(\F)=h_{\mu}(\sigma_X)+\lim_{n\to \infty}\frac{1}{n}\int \log f_nd\mu= 
h_{\pi\mu}(\sigma_Y)+\lim_{n\to \infty}\frac{1}{n}\int \log g_nd\pi\mu=P(\G).
$$
\end{prop}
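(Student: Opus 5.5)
The natural choice is to let $\G$ be the sequence obtained by pushing an invariant Gibbs measure forward and then apply Corollary \ref{nice} on $Y$. By Theorem \ref{main}\ref{m2}, since $X$ is balanced with respect to $\F$, there is an invariant Gibbs measure $\mu$ for $\F$. Its image $\pi\mu$ lies in $M(Y,\sigma_Y)$. Define
\[
g_n(y):=\pi\mu([y_1\dots y_n]) \quad \text{for } y\in Y,\ n\in\N .
\]
Each $g_n$ is locally constant, hence continuous, and strictly positive because $\pi$ is surjective: every $w\in B_n(Y)$ has a preimage word $u\in B_n(X)$, and $\mu([u])>0$ by the Gibbs property. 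By construction $\pi\mu$ is Gibbs for $\G=\{\log g_n\}_{n\in\N}$ with $P=0$ and $C=1$. Example \ref{ex2} (equivalently Corollary \ref{nice}) then shows that $Y$ is balanced with respect to $\G$. This settles the first assertion.

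For the pressure identities, the first equality $P(\F)=h_\mu(\sigma_X)+\lim_n \frac1n\int\log f_n\,d\mu$ is (\ref{vp}) of Theorem \ref{main}\ref{m2}. Applying the same theorem to the invariant Gibbs measure $\pi\mu$ for $\G$ gives
\[
P(\G)=h_{\pi\mu}(\sigma_Y)+\lim_n\frac1n\int\log g_n\,d\pi\mu .
\]
Here $P(\G)=0$: by Lemma \ref{charBSM} and Remark \ref{aboutP} the pressure equals the Gibbs constant $P$, or directly, $Z_n(\G)=\sum_{w\in B_n(Y)}\pi\mu[w]=1$. Moreover $\frac1n\int\log g_n\,d\pi\mu=\frac1n\sum_{w}\pi\mu[w]\log\pi\mu[w]\to -h_{\pi\mu}(\sigma_Y)$ by the definition of entropy via the generating partition, so the middle expression is also $0$.

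The main obstacle is the middle equality: the $X$-side expression equals $P(\F)$, the $Y$-side equals $0$, and these differ in general. I would repair this by normalizing: replace $g_n$ by $e^{nP(\F)}g_n$, which keeps $\pi\mu$ Gibbs for the new $\G$, now with constant $P(\F)$. Then $P(\G)=P(\F)$, and the $Y$-side expression becomes $h_{\pi\mu}(\sigma_Y)-h_{\pi\mu}(\sigma_Y)+P(\F)=P(\F)$, so all four quantities coincide. The only point to check is that the limits exist. On the $Y$ side this follows from the entropy limit above. On the $X$ side it follows from the bounded variation and Gibbs property, as noted at the end of the proof of Theorem \ref{main}.
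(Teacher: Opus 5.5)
Your argument is correct, but it picks a different $\G$ than the paper does.

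\textbf{The paper's route.} The paper defines $\G$ from $\F$ and $\pi$ alone:
\[
g_n(y)=\sup_{E_n(y)}\sum_{x\in E_n(y)}f_n(x)=\sum_{\pi(u)=y_1\dots y_n}\sup_{[u]}f_n .
\]
The Gibbs property of $\mu$ shows that $\pi\mu([y_1\dots y_n])$ is comparable to $e^{-nP}g_n(y)$. So $\pi\mu$ is an invariant Gibbs measure for $\G$ with the same constant $P=P(\F)$. Theorem \ref{main}\ref{m2} and Remark \ref{aboutP}, applied on $Y$, then give both the balancedness of $Y$ and the chain of equalities.

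\textbf{Your route.} You build $\G$ from the measure itself, $g_n=e^{nP(\F)}\pi\mu[y_1\dots y_n]$. Then:
\begin{itemize}
\item $\pi\mu$ is Gibbs for $\G$ with constant $1$ by construction.
\item Balancedness is Example \ref{ex2}: the ratios in (\ref{RB}) and (\ref{LB}) are exactly $1$, and multiplying $g_n$ by $e^{nc}$ does not change them.
\item $Z_n(\G)=e^{nP(\F)}$ gives $P(\G)=P(\F)$.
\item The middle term follows from Kolmogorov--Sinai.
\end{itemize}

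\textbf{What each buys.} Your approach is more elementary on the $Y$ side, since it never needs (\ref{vp}) for $\G$. The cost is that the proposition becomes nearly tautological, and your $\G$ depends on the chosen $\mu$ rather than only on $\F$ and $\pi$. The paper's $\G$ works for every invariant Gibbs measure for $\F$ at once, and it exhibits $\pi\mu$ as Gibbs for an explicit potential built from $\F$. That is the content of the generalization of \cite[Theorem 3.1]{yy}. The two choices are in fact uniformly comparable: $e^{nP}\pi\mu[y_1\dots y_n]$ lies within a bounded factor of the paper's fiber sum.

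\textbf{One point to tighten.} Read the second assertion with $\G$ already fixed and $\mu'$ an arbitrary invariant Gibbs measure for $\F$ (Example \ref{ex1} shows there can be several). Your direct entropy computation only covers the particular $\mu$ used to define $\G$. The fix takes a few lines:
\begin{enumerate}
\item Any two Gibbs measures for $\F$ have the same constant $P=P(\F)$. Summing the Gibbs inequality over $B_n(X)$ forces this.
\item Hence $\mu$ and $\mu'$ are comparable on cylinders with uniform constants, and so are $\pi\mu$ and $\pi\mu'$.
\item Thus $\pi\mu'$ is also Gibbs for your $\G$. Now apply (\ref{vp}) of Theorem \ref{main}\ref{m2} to $\pi\mu'$, or redo your computation, which now only incurs an $O(1/n)$ error.
\end{enumerate}
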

\begin{proof}
We apply Theorem \ref{main} and the proof in \cite{yy}.  If $\mu$ is an invariant Gibbs measure for $\F$, 
there exists $M\geq 1$ such that
$ f_n(x)/f_n(y)\leq M$ for all $x, y\in [u], u\in B_n(X)$,  for all $n\in \N$.
For $y\in Y$, let $E_n(y)$ be a set consisting of exactly one point from each cylinder $[u]$, $u\in B_n(X)$
such that $ \pi([u])\subseteq [y_1\dots y_n].$
Then there exist $P\in \R$ and $C\geq 1$ such that 
\begin{equation*}
 \begin{split}
\frac{1}{e^{nP}MC} \sup_{E_n(y)} \{\sum_{x \in E_n(y)} f_n(x)\} \leq \sum_{\substack{u\in B_n(X)\\ \pi(u)=y_1\dots y_n}}\mu[u]
\leq \frac{C}{e^{nP}}
\sup_{E_n(y)} \{\sum_{x \in E_n(y)} f_n(x)\}
\end{split}
\end{equation*}
Define $g_n(y)=\sup_{E_n(y)} \{\sum_{x \in E_n(y)} f_n(x)\}$, $y\in Y$.  Then for each $n\in\N$, $g_n$ is locally constant. The last equalities hold by 
Theorem \ref{main} (see \cite[Theorem 4.5]{N} for the case when $f_n=e^{S_n(f\circ\pi)},  f\in C(Y)$ is H\"{o}lder).
\end{proof}

Acknowledgements: The author 
thanks Professor  De-Jung Feng  for his comment on the question regarding the Gibbs property of preimage measures. 
The author also thanks Dr. Reza Mohammadpour for  
useful discussion which helped to come up with  Example \ref{ex1} and motivated me to study factor maps.   



\begin{thebibliography}{99}
\bibitem{b2} L.\ Barreira.  {Nonadditive thermodynamic formalism: equilibrium and Gibbs measures.} {\em Discrete Contin. Dyn. Syst.} \textbf{16} (2006), 279--305.

      \bibitem{BG} S. Baker and A. E. Ghenciu. {Dynamical properties of $S$-gap shifts and other shift spaces}. 
      {\em J. Math. Anal. Appl. } \textbf{430} (2015), no. 2, 633-647.
 \bibitem{Bo} R.\ Bowen. 
 {Some systems with unique equilibrium states.} {\em Math. Syst. Theory} \textbf{8} (1974),
193--202.
    
     
  
 




\bibitem{CFH} Y. L.\ Cao, D.J.\ Feng and  W.\ Huang. 
{The thermodynamic formalism for sub-additive potentials.} {\em Discrete Contin. Dyn. Syst.}
 \textbf{20} (2008), 639--657.



\bibitem{Fe} D. J.\ Feng. {Equilibrium states for factor maps
between subshifts.} {\em Adv. Math.} \textbf{226} (2011), 2470--2502.






\bibitem{kim} M. \ Kim, 
{A necessary and sufficient condition for the existence of invariant Gibbs measures.} 
{\em Bull. Korean Math. Soc.} \textbf{61}  (2024), 1087–1105.

\bibitem{ly} C.\ Lacalle and Y. \ Yayama.  {On generalized compensation functions for factor maps between shift spaces on countable alphabets.} {\em Stoch. and Dyn. } \textbf{21} (2021), 2150012.
\bibitem{MQ} R.\ Mohammadpor and A. \ Quas. {Non-unique equilibrium measures and freezing phase transitions for matrix cocycles for negative t.} {\em Nonlinearity} \textbf{39}(2025), 015014. 
\bibitem{MV} R.\ Mohammadpor and P. \ Varandas. {Statistical properties of equilibrium states 
for fiber-bunched matrix cocyles and applications.} (2025) arXiv: 2508.057

\bibitem{m} A.\ Mummert.  {The thermodynamic formalism for almost-additive sequences.} 
{\em Discrete Contin. Dyn. Syst.} \textbf{16} (2006), 435-454.

\bibitem{N} T. Namiki. {The degree functions for cellular dynamics}. {\em Proc. Japan Acad.} 
\textbf{71}, Ser. A (1995), 10-12.
\bibitem{P} K. Park. { Quasi-multiplicativity of typical cocycles.} {\em Commun. Math. Phys.} 
\textbf{376}, (2020), 1957–2004.
\bibitem{R} T. Rush. 
{On the Superadditive Pressure for 1-Typical, One-Step,
Matrix-Cocycle Potentials.}
{\em Commun. Math. Phys. } (2024), 405-251.

\bibitem{w}P.\ Walters. {Relative pressure, relative equilibrium states, compensation functions
 and many-to-one codes between subshifts}.
{\em Trans. Amer. Math. Soc.} \textbf{296} (1986), 1-31.

\bibitem{w1} J. \ Wu. {Non-additive Thermodynamic Formalism and Applications.} 
Master’s Thesis, The Chinese University of Hong Kong, 2021.

\bibitem {Y2} Y.\ Yayama. { Existence of a measurable saturated compensation function between subshifts and
its applications}. { \em Ergod. Th. \&  Dynam. Sys.} \textbf{31} (2011), 1563--1589.


\bibitem {yy}Y. \ Yayama. {On factors of Gibbs measures for almost additive potentials}. 
{ \em Ergod. Th. \&  Dynam. Sys.} 
(2016), 276–309.




\bibitem{Yo} J.\ Yoo. 
{Decomposition of infinite-to-one factor cods and uniqueness of relative equilibrium states}.
{\em J. Mod.
Dyn.} \textbf{13} (2018), 271–284.

\end{thebibliography}
\end{document}